\theoremstyle{plain}
\newtheorem{dummy}{anything}[section]
\newtheorem{theorem}[dummy]{Theorem}
\newtheorem{question}[dummy]{Question}
\newtheorem{proposition}[dummy]{Proposition}
\newtheorem{corollary}[dummy]{Corollary}
\theoremstyle{definition}
\newtheorem{definition}[dummy]{Definition}
\newtheorem{remark}[dummy]{Remark}
\newcommand{\del}{\partial}
\newcommand{\R}{\mathbb{R}}
\def\a{\alpha}
\def\b{\beta}
\def\g{\gamma}
\def\S{\Sigma}
\def\l{\lambda}
\begin{document}

\title{Genus one Lefschetz fibrations on disk cotangent bundles of surfaces}

\author{Burak Ozbagci}

\address{Department of Mathematics, Ko\c{c} University, Istanbul,
Turkey}
\email{bozbagci@ku.edu.tr}

\subjclass[2000]{}
\thanks{}


\begin{abstract}

We describe a Lefschetz fibration of genus one on the disk cotangent bundle of any closed orientable surface $\S$. As a corollary, we obtain an explicit genus one open book decomposition adapted to the
canonical contact structure on the unit cotangent bundle of $\S$.


\end{abstract}

\maketitle

\section{Introduction}\label{sec: intro}

Let $\S$ be a closed, connected and orientable surface.  Let $(DT^*\S, \omega_{can})$ denote the disk cotangent bundle of  $\S$  equipped with its canonical symplectic structure $\omega_{can} = d \l_{can}$, where $\l_{can}$ is the Liouville one form. Let $(ST^*\S, \xi_{can})$ denote the unit cotangent bundle of $\S$  equipped with its canonical contact  structure $\xi_{can}$ defined as the kernel of the restriction of $\l_{can}$ to $ST^*\S$.  It follows that $(DT^*\S, \omega_{can})$ is an exact symplectic filling of its contact boundary $(ST^*\S, \xi_{can})$. In fact,  $(DT^*\S, \omega_{can})$ can be upgraded to a Weinstein filling of $(ST^*\S, \xi_{can})$ and hence by Cieliebak and Eliashberg \cite{ce},  $(DT^*\S, J)$ is a Stein filling of $(ST^*\S, \xi_{can})$, for some complex structure $J$. Therefore,  by the work of Akbulut and the author  \cite{ao} and also Loi and Piergallini \cite{lp}, $DT^*\S$ admits a Lefschetz fibration over $D^2$ whose induced open book on the boundary $ST^*\S$ supports $\xi_{can}$.   In this article, we find an explicit Lefschetz fibration $DT^*\S  \to D^2$ with {\em minimal fiber genus}, whose induced open book on the boundary $ST^*\S$ supports $\xi_{can}$, using methods specifically tailored to the cotangent bundle of a surface and very different from those general methods described  in \cite{ao} and \cite{lp}.

 It is clear by definition  that the minimal fiber genus of such a Lefschetz fibration must be greater than equal to the support genus $sg(ST^*\S, \xi_{can})$, which is the minimal page genus of an open book decomposition of  $ST^*\S$  adapted to  $\xi_{can}$. Let $ g$ denote the genus of the surface $\S$ at hand. For $g=0$, there is a Lefschetz fibration  $DT^*S^2\to D^2$, whose regular fiber is the annulus and whose monodromy is the square of the positive Dehn twist along the core circle. The restriction of this Lefschetz fibration to the boundary gives a {\em planar}  open book decomposition of $ ST^*S^2 \cong \mathbb{RP}^3$ adapted to $\xi_{can}$. For $g  \geq 1$, however, the contact $3$-manifold $(ST^*\S, \xi_{can})$ is known to be non-planar.  By an obstruction to planarity due to   Etnyre \cite{et},
$sg(ST^*T^2 \cong T^3, \xi_{can}) \neq 0$. Moreover, Van
        Horn-Morris \cite{vhm} constructed an explicit open book  adapted to $(T^3, \xi_{can})$ whose page is a genus one surface with three boundary components. Furthermore, McDuff \cite{mcd} showed that for any $g >1$,  there is an exact symplectic $4$-manifold with two convex boundary components, one of which is  $(ST^*\S, \xi_{can})$. By capping off the other boundary component with concave symplectic fillings with arbitrarily large $b_2^+$ (cf. \cite{eh}), we obtain strong symplectic fillings of $(ST^*\S, \xi_{can})$  with arbitrarily large $b_2^+$. Therefore,  $(ST^*\S, \xi_{can})$ cannot be planar for $g >1$ either, by the aforementioned obstruction  of Etnyre. On the other hand, using Giroux's fundamental work \cite{g} on convex contact structures,  Massot \cite{m}  showed that for $g >1$, the contact $3$-manifold $(ST^*\S, \xi_{can})$ has an adapted open book decomposition
        whose page is a genus one surface with $4g+4$ boundary components---{\em without explicitly describing its monodromy.} To summarize, we have

        \[ sg(ST^*\S, \xi_{can}) =\left\{ \begin{array}{ll}  0  & \mbox{\; if $g=0$ } \\
1 & \mbox{\; if $g >0$.}
\end{array}\right. \]


Therefore,  for $g >0$, any Lefschetz fibration $DT^*\S  \to D^2$ whose induced open book on the boundary $ST^*\S$ supports $\xi_{can}$ must have fiber genus at least one. Here  we explicitly construct genus one Lefschetz fibrations $DT^*\S \to D^2$, using two different methods (due to Johns \cite{jo} and  Ishikawa \cite{i}), and show that these fibrations are isomorphic.  As a corollary, we obtain an explicit genus one open book decomposition adapted to the
canonical contact structure $\xi_{can}$ on  $ST^*\S$. We would like to point out that there is an orientation error in Ishikawa's paper \cite{i} and the total space of the Lefschetz fibrations he constructs is orientation-preserving diffeomorphic to the disk {\em cotangent} bundle rather than the disk {\em tangent} bundle (see Section~\ref{sec: ish}, for further details).  Note that both methods due to Johns and Ishikawa require the choice of a Morse function $f: \S \to \R$, to begin with.

The results in this paper greatly improve our earlier work in \cite{oo}, where we used the method of Johns,  to describe a Lefschetz fibration $DT^*\S\to D^2$ of {\em genus $g$} and hence a {\em genus $g$} open book decomposition adapted to $(ST^*\S, \xi_{can}) $. In that article,  we used the standard Morse function on $\S$, with a unique index zero critical point.  The improvement here comes by the choice of a Morse function on $\S$ with two index zero, $2g+2$ index one, and two  index two critical points. As a matter of fact,  Massot obtains an open book decomposition adapted to $(ST^*\S, \xi_{can}) $ by promoting  any  given self-indexed Morse function on $\S$ to an ordered $\xi_{can}$-convex Morse function on $ST^*\S$,  and  the minimum possible genus for his open book decomposition is achieved by using a non-standard Morse function on  $\S$ whose number of critical points of each index agrees with the one that we mentioned above. Moreover,  such  a Morse function appears again in our construction of a Lefschetz fibration $DT^*\S\to D^2$ of genus one, using Ishikawa's method.   The existence of such a Morse function on $\S$ is the unifying theme in all three constructions, which explains the fact that in each construction, the page of the open book decomposition is of genus one with precisely $4g+4$ boundary components. Note that, Massot obtains his open book decomposition by  ``convexifying"  any given Morse function on $\S$,  whereas Johns and Ishikawa obtains their Lefschetz fibrations by ``complexifying" the Morse function at hand.

Although the existence of a genus one open book decomposition adapted to $(ST^*\S, \xi_{can})$ essentially follows from the fundamental work of Giroux \cite{g}, we believe that by providing an explicit positive factorization of the monodromy of a genus one  open book decomposition adapted to $\xi_{can}$, we fill a gap in the literature.  Moreover, we hope that our work maybe used towards settling Wendl's conjecture \cite[Conjecture 9.23]{w}: {\em Any exact filling of $(ST^*\S, \xi_{can})$ is Liouville deformation equivalent to $(DT^*\S,  \omega_{can})$.} This conjecture is true for $g=1$ as shown by Wendl \cite{w1} and some evidence was obtained recently to verify this conjecture affirmatively  by  Li, Mak and Yasui \cite{lmy} and also by Sivek and Van-Horn Morris \cite{svhm}, who  showed that any exact filling must,  at least topologically, bear some resemblance to $DT^*\S$ for $g \geq 2$.

\section{Canonical contact structure on the unit cotangent bundle of an orientable  surface}

 Suppose that $\S$ is any closed, connected, orientable surface. A cooriented contact element of $\S$ is a pair $(p, L)$ where $p \in \S$ and $L$ is a cooriented line tangent to $\S$ at $p$.  The space of cooriented contact elements of $\S$ is the collection of all cooriented contact elements of $\S$.  There exists a  canonical coorientable contact structure $\xi_{can}$ on the space of cooriented contact elements of $\S$,   which is defined as follows.  Let $\pi$ denote the natural projection of the  space of cooriented contact elements of $\S$ onto  $\S$.
For a  point $p \in S$ and a cooriented line $L$ in $T_p S$, let
$\xi_{(p,L)}$ denote the cooriented
  plane described uniquely by the
  equation $\pi_*(\xi_{(p,L)}) = L \in T_p \S$. The canonical contact structure $\xi_{can} $ on the space of cooriented contact elements of  $\S$ consists of these planes.

 If $\S$ is equipped with an arbitrary  Riemannian
        metric $\mu$,  then there is a bundle isomorphism $\Phi$ from the tangent bundle $T\S$ to the cotangent bundle $T^*\S$, which is defined fiberwise $$ \Phi_p : T_p\S \to T_p^*\S $$  by
        $ v \to \mu_p (v, -)$, for any $p \in \S$. This induces a bundle metric $\mu^*$ on $T^*\S$ by $$\mu_p^*(u_1, u_2) = \mu_p (\Phi_p^{-1} (u_1),  \Phi_p^{-1} (u_2))$$ for any $u_1 , u_2 \in T_p^*\S$. Therefore, one can define the unit tangent bundle $ST\S$ and the unit cotangent bundle $ST^*\S$ fiberwise as the collection of unit length vectors and covectors in $T_p\S$ and $T_p^*\S$, respectively.

The  space of cooriented contact elements of $\S$ can be identified with the unit tangent bundle $ST\S$ as well as the unit cotangent
        bundle $ST^*\S$.  The identification with $ST\S$ is given by taking a cooriented contact element $(p, L)$  to  the vector $v \in ST_p\S$, which is positively orthonormal  to $L$ in $T_p \S$ with respect to $\mu_p$.  Similarly, the identification with $ST^*\S$ is given by taking a cooriented contact element $(p, L)$  to  the unit covector $\Phi_p (v)= \mu_p (v, -) \in T_p^*\S$. Note that the Liouville $1$-form
        $\lambda_{can}$ on $T^*\S$ descends to  a contact $1$-form on $ST^*\S$, denoted again by $\lambda_{can}$.
        The canonical contact structure  $\xi_{can}$ on $ST^*\S$  is given by the kernel of $\lambda_{can}$ under the above identification (see for example \cite[page 32]{ge}).

        The disk tangent bundle $DT\S$ and the disk cotangent bundle $DT^*\S$ are defined fiberwise as the collection of vectors and covectors of length less than or equal to one in $T_p\S$ and $T_p^*\S$, respectively. It follows that $\partial (DT\S)= ST\S$ and $\partial (DT^*\S) = ST^*\S$. By the discussion above, we get an {\em orientation-reversing}  diffeomorphism between $DT^*\S$ and $DT\S$. Moreover, the disk
        cotangent bundle $DT^*\S$ equipped with its canonical symplectic structure $\omega_{can}=d\lambda_{can}$ is an exact  symplectic
        filling of its contact boundary  $(ST^*\S, \xi_{can}) $.

\section{A genus one Lefschetz fibration on the disk cotangent bundle of an orientable surface} \label{sec: cotlef}

\subsection{Exact symplectic Lefschetz fibrations on cotangent bundles} \label{subsec: exact}  An exact
symplectic structure on a smooth $4$-manifold $X$ with codimension $2$ corners is an exact symplectic
    $2$-form $\omega= d\lambda$ on $X$ such that the Liouville vector field (which is by definition $\omega$-dual to $\lambda$) is
    transverse to each boundary stratum of codimension $1$ and points outwards.
  Note  that $\lambda$ induces a contact form on each boundary
    stratum and $(W, \omega)$ becomes an exact symplectic filling of its
contact  boundary $(\partial X, \ker \lambda)$ provided that the corners of $W$ are rounded off (cf. \cite[Lemma 7.6]{s}).

\begin{definition} \label{def:exactsym} Suppose that $(X, \omega=d\lambda)$ is an exact symplectic $4$-manifold
 with codimension $2$ corners. We say that a smooth map $\pi : X  \to D^2$ is  an
 exact symplectic Lefschetz fibration on $(X, \omega)$ if it the following four conditions are satisfied.

(1)  There are  finitely many critical points $q_1, \ldots, q_k$ of the map $\pi$, all of which belong to the interior of $X$.  In addition, around each critical point,
the smooth map $\pi$ is modeled on the map $(z_1,
z_2) \to z_1^2 + z_2^2$ in complex local coordinates compatible with
the orientations on $X$ and $D^2$, respectively.

(2) Each fiber of the map $\pi|_{X\setminus \{q_1,
\ldots, q_k\}} : X \setminus \{q_1, \ldots, q_k\} \to D^2$ is a
symplectic submanifold of $(X, \omega)$.

(3) The boundary $\partial X$ is the union of two smooth strata meeting at a codimension $2$
corner. More precisely, $\partial X = \partial_vX \cup \partial_hX $  where $$\partial_vX = \pi^{-1} (\partial D^2)\; \mbox{and}\;
\partial_hX = \bigcup_{x \in D^2} \partial (\pi^{-1} (x) ).$$

(4) We also require that
$\pi|_{\partial_vX} :
\partial_vX \to
\partial D^2$ is smooth fibration and $\pi$ is a trivial smooth fibration over $D^2$ near
$\partial_hX$. \end{definition}

The stratum  $\partial_vX$ admits a surface fibration over $S^1$, while the stratum  $\partial_hX$ is a disjoint union
of $m > 0$ copies of the solid torus $S^1 \times D^2$, and these strata meet meet each other at the corner
$$\partial_vX \cap \partial_hX=
\partial(\partial_hX)= \coprod_{i=1}^{m} (S^1 \times \partial D^2).$$ We conclude that $\partial X$, provided that the corners of $X$ are
rounded off,  acquires
an open book decomposition given by $\pi|_{\partial X \setminus B} :
\partial X \setminus B \to \partial D^2$ where $\partial_hX$ is
viewed as a tubular neighborhood of the binding $$B = \coprod_{i=1}^{m}  (S^1
\times \{0\}).$$  Moreover, the $1$-form $\lambda$ restricts to a contact form on
$\partial X$ whose kernel is a contact structure supported by this
open book decomposition.

\begin{remark}  If we suppress the symplectic form $\omega$ on $X$,  then a smooth map  $\pi : X  \to D^2$  which satisfies only the conditions (1) and (4) in Definition~\ref{def:exactsym} is called a Lefschetz fibration on $X$.
\end{remark}

\begin{definition} \label{def:confor} For $i=1,2$, let $(X_i , \omega_i=d \lambda_i)$ be an exact symplectic $4$-manifold with convex boundary.  A conformal exact
symplectomorphism from $(X_1,
\omega_1)$ to  $(X_2, \omega_2)$ is a
diffeomorphism $\psi : X_1 \to X_2$ such that $\psi^*\lambda_2 =
C\lambda_1 + dh$ for some smooth function $h: X_1 \to \mathbb{R}$,
and some real number $C>0$. If $C=1$, then $\psi$ is called an exact
symplectomorphism.
\end{definition}

Now, we briefly recall the main result of  Johns in \cite{jo}, which we tailor appropriately to fit in with the context of the present article.  Let $\S$ be a closed, connected, orientable surface of genus $g$ equipped with a Morse function $f: \S \to \R$  and a Riemannian metric $\mu$ such that $(f, \mu)$ is Morse-Smale.  Based on this data,  Johns constructed an exact
symplectic Lefschetz fibration $\pi_f: (E, \omega) \rightarrow D^{2}$
such that there is a conformally exact symplectomorphism $\phi : (E, \omega) \to (DT^*\S,  \omega_{can})$.  Here we assume that the corners of $E$ are rounded off, so that it can be viewed as an exact symplectic $4$-manifold with convex boundary.


\begin{remark} \label{rem: main} Since any conformal exact
symplectomorphism is, in particular, a diffeomorphism by definition, we conclude that $E$ is diffeomorphic to $DT^*\S$, and we view $\pi_f$ as a  Lefschetz fibration on $DT^*\S$.  Moreover, since $(E, \omega=d \lambda)$ is a exact symplectic filling of its contact boundary $(\partial E, \ker \lambda) $ and the conformal exact symplectomorphism $\phi : (E, \omega) \to (DT^*\S,  \omega_{can})$  satisfies $\phi^*\omega_{can} = K \omega$  for some constant $ K > 0$, the open book decomposition induced by $\pi_f$ on the boundary  $\partial(DT^*\S)=ST^*\S$ supports the canonical contact structure $\xi_{can}$. \end{remark}


In the following,  we apply Johns' method to construct an explicit  Lefschetz fibration $DT^*\S \to D^2$, with the caveat above, where the regular fiber is a surface of genus one with $4g+4$ boundary components.  To illustrate the method of construction, we first give in Section~\ref{subsec: genone} a  detailed treatment when $\S$ is a closed, orientable surface of genus one.

\subsection{The case of $T^2$} \label{subsec: genone}

The construction of Johns \cite{jo} starts with a Morse function $T^2 \to \mathbb{R}$ or equivalently a handle decomposition of $T^2$,  which possibly includes twisted $1$-handles.   Instead of giving an explicit Morse function on $T^2$, here we describe a handle decomposition of $T^2$ given by two $0$-handles, four twisted $1$-handles, and two $2$-handles. The four $1$-handles are attached to the $0$-handles as shown in Figure~\ref{fig: handledecom}. The result is an orientable surface of genus one with two boundary components. We obtain $T^2$ by attaching the $2$-handles.
\begin{figure}[h]
  \relabelbox \small {\epsfxsize=2.5in
  \centerline{\epsfbox{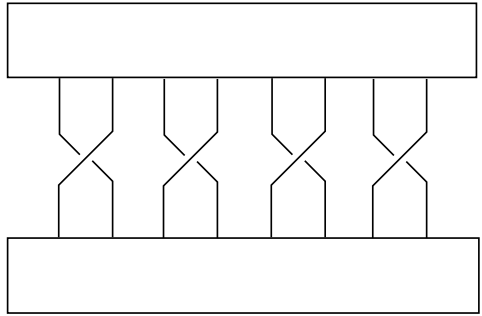}}}
  \endrelabelbox
\caption{The top and bottom rectangular regions are the $0$-handles, while each twisted band connecting them is a twisted $1$-handle.}
\label{fig: handledecom}
\end{figure}

Based on  this handle decomposition of $T^2$,
we describe a Lefschetz fibration $DT^*T^2  \to D^2$ of genus one, using the recipe given explicitly  in \cite[Section 4.3]{jo}. We first describe the regular fiber of  the Lefschetz fibration $DT^*T^2 \to D^2$, as an abstract surface of genus one with eight boundary components.   We start with two disjoint annuli $A_1$ and $A_2$ corresponding to the $0$-handles of $T^2$ as depicted
in Figure~\ref{fig: vanishingab}, where the horizontal rectangular region labeled by $A_i$  represents the annulus $A_i$ by identifying its left-edge with its right-edge, for $i=1,2$.  Now, for each twisted $1$-handle of $T^2$, we attach two $1$-handles to the union $A_1 \cup A_2$, which is equivalent to plumbing an annulus.  Therefore, we plumb four disjoint annuli $B_1, B_2, B_3, B_4$ to $A_1 \cup A_2$,  where each annulus $B_j$ is represented by the vertical rectangular region labeled by $B_j$  in Figure~\ref{fig: vanishingab}, by identifying its top-edge with its bottom-edge, for $j=1,2,3,4$.   Note that these six annuli are plumbed together in the eight overlapping squares in  Figure~\ref{fig: vanishingab}.   As a result, we obtain an  orientable surface of genus one with eight boundary components, which is the regular fiber of the Lefschetz fibration $DT^*T^2 \to D^2$.

\begin{figure}[h]
\begin{center}

\relabelbox \small {\epsfxsize=3in
  \centerline{\epsfbox{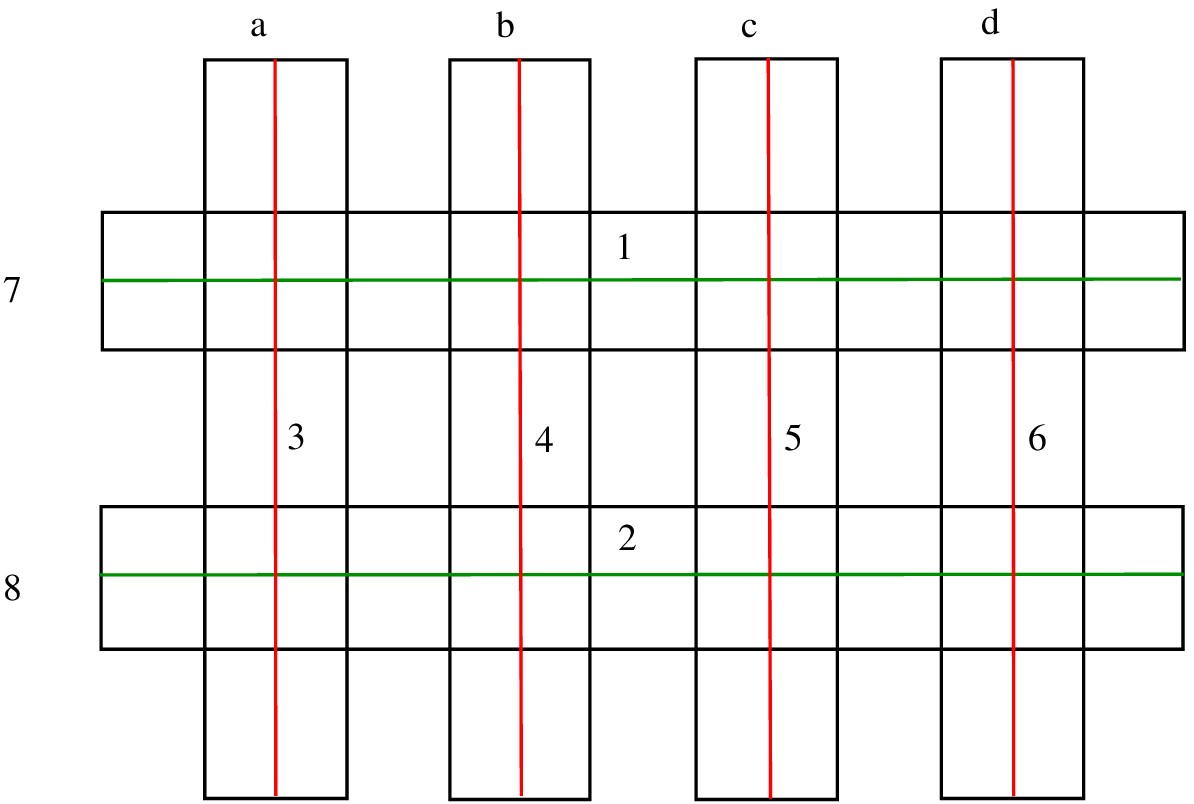}}}
   \relabel{7}{{$A_1$}}
  \relabel{8}{{$A_2$}}
\relabel{a}{{$B_1$}}
  \relabel{b}{{$B_2$}}
\relabel{c}{{$B_3$}}
  \relabel{d}{{$B_4$}}
  \relabel{1}{{$a_1$}}
  \relabel{2}{{$a_2$}}
\relabel{3}{{$b_1$}}
  \relabel{4}{{$b_2$}}
\relabel{5}{{$b_3$}}
  \relabel{6}{{$b_4$}}
  \endrelabelbox
                        \caption{The vanishing cycles $a_1, a_2, b_1, b_2, b_3, b_4$.}
                        \label{fig: vanishingab}
                    \end{center}
    \end{figure}

Next, we describe the vanishing cycles of this Lefschetz fibration. The first two vanishing cycles are the core circles of $A_1$ and $A_2$, which we denote by $a_1$ and $a_2$, respectively. The next four vanishing cycles are the core circles of $B_1, B_2, B_3, B_4$, which we denote by $b_1, b_2, b_3, b_4$, respectively (see Figure~\ref{fig: vanishingab}).   The last two vanishing cycles $c_1$ and $c_2$ are obtained by performing {\em simultaneous surgery}   of $a_1 \cup  a_2$ and $b_1  \cup b_2 \cup b_3  \cup b_4$ at each point where these curves meet. This means that any time $a_i$ intersects some $b_j$, the intersection point is resolved by a surgery, where $a_i$ turns to the left as illustrated in Figure~\ref{fig: lagsur}. By resolving the eight intersection points in Figure~\ref{fig: vanishingab}, we obtain a curve with two components, denoted by $c_1$ and $c_2$ as shown in Figure~\ref{fig: vanishingc}.

 \begin{figure}[h]
  \relabelbox \small {\epsfxsize=3in
  \centerline{\epsfbox{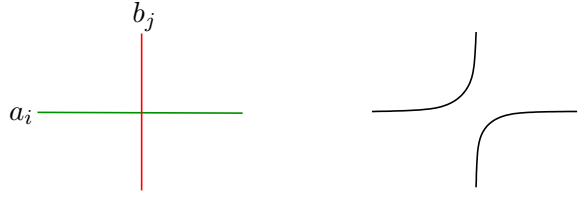}}}
  \relabel{1}{{$a_i$}}
  \relabel{2}{{$b_j$}}
  \endrelabelbox
\caption{Surgery at a point where $a_i$ meets $b_j$.}
\label{fig: lagsur}
\end{figure}

\begin{figure}[h]
                    \begin{center}

                         \relabelbox \small {\epsfxsize=3in
  \centerline{\epsfbox{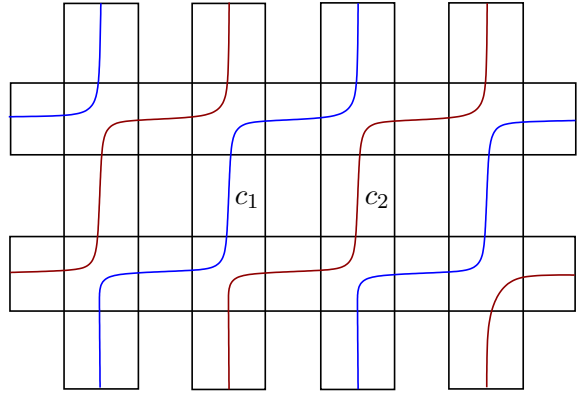}}}
  \relabel{1}{{$c_1$}}
  \relabel{2}{{$c_2$}} \endrelabelbox
                        \caption{The last two vanishing cycles $c_1$ and $c_2$. }
                        \label{fig: vanishingc}
                    \end{center}
    \end{figure}

The monodromy of the Lefschetz fibration $DT^*T^2 \to D^2$ is given by the composition of positive Dehn twists $$ D(a_1) D(a_2) D(b_1) D(b_2) D(b_3) D(b_4) D(c_1) D(c_2).$$

To summarize, we proved the following result.

\begin{proposition} \label{prop: torus} There exists a Lefschetz fibration $DT^*T^2 \cong T^2 \times D^2 \to D^2$, whose regular fiber is a surface of genus one with eight boundary components. The monodromy of this Lefschetz fibration is given by the composition of positive Dehn twists $$ D(a_1) D(a_2) D(b_1) D(b_2) D(b_3) D(b_4) D(c_1) D(c_2)$$ where the curves $a_1, a_2, b_1, b_2, b_3, b_4, c_1, c_2$ are depicted in Figures~\ref{fig: genusoneab} and \ref{fig: genusonec} on a standard genus one surface with eight boundary components.
\end{proposition}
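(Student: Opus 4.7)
The plan is to invoke the construction of Johns \cite{jo} as recalled in Section~\ref{subsec: exact} and Remark~\ref{rem: main}, applied to the specific handle decomposition of $T^2$ depicted in Figure~\ref{fig: handledecom}. First I would fix a self-indexed Morse--Smale pair $(f,\mu)$ on $T^2$ realizing this handle decomposition, with two index~$0$, four (twisted) index~$1$, and two index~$2$ critical points. Johns' recipe then produces an exact symplectic Lefschetz fibration $\pi_f : (E,\omega) \to D^2$ together with a conformal exact symplectomorphism $\phi : (E,\omega) \to (DT^*T^2, \omega_{can})$. Since $T^2$ is parallelizable, $E \cong DT^*T^2 \cong T^2 \times D^2$, so it only remains to identify the regular fiber, the vanishing cycles, and the cyclic order in which their Dehn twists appear in the monodromy.

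The bulk of the work is to verify that Johns' construction, unpacked for this Morse data, outputs the fiber and the vanishing cycles described in the statement. Each index~$0$ critical point contributes an annulus (the two annuli $A_1, A_2$), each (twisted) index~$1$ critical point contributes an annulus that is plumbed onto these along the two shadows of the corresponding $1$-handle attaching regions (producing $B_1,\ldots,B_4$ and the eight plumbing squares of Figure~\ref{fig: vanishingab}). A short Euler characteristic computation then confirms that the resulting abstract surface has genus one with eight boundary components. Moreover, Johns' prescription defines the vanishing cycles associated to the index~$0$ and index~$1$ critical points to be precisely the core circles of the six plumbed annuli, which are the curves $a_1, a_2, b_1, b_2, b_3, b_4$.

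For the two index~$2$ critical points, Johns prescribes each vanishing cycle as the appropriate union of lower-index vanishing cycles on the fiber, resolved at every intersection point by the simultaneous surgery rule of Figure~\ref{fig: lagsur} (where $a_i$ always turns to the left). The main combinatorial check, and the step I expect to require the most care, is that resolving the eight intersection points of $a_1 \cup a_2$ with $b_1 \cup b_2 \cup b_3 \cup b_4$ according to this rule gives exactly the two simple closed curves $c_1$ and $c_2$ drawn in Figure~\ref{fig: vanishingc}; I would verify this square-by-square, by tracking which arc turns which way at each square and counting the resulting components. Once this is established, Johns' theorem supplies the monodromy of $\pi_f$ as the ordered product of positive Dehn twists about the vanishing cycles in the order determined by the self-indexing of $f$, yielding
\[ D(a_1) D(a_2) D(b_1) D(b_2) D(b_3) D(b_4) D(c_1) D(c_2), \]
as claimed.
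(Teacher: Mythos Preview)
Your proposal is correct and follows essentially the same approach as the paper: you apply Johns' recipe \cite[Section 4.3]{jo} to the handle decomposition of $T^2$ in Figure~\ref{fig: handledecom}, build the regular fiber by plumbing the annuli $B_1,\ldots,B_4$ onto $A_1\cup A_2$, read off the vanishing cycles $a_i,b_j$ as core circles, and obtain $c_1,c_2$ via the simultaneous left-turn surgery of Figure~\ref{fig: lagsur}. The paper presents exactly this argument as the content of Section~\ref{subsec: genone} preceding the proposition, so your outline matches it step for step.
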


\begin{figure}[h]
                    \begin{center}
                         \relabelbox \small {\epsfxsize=4.5in
  \centerline{\epsfbox{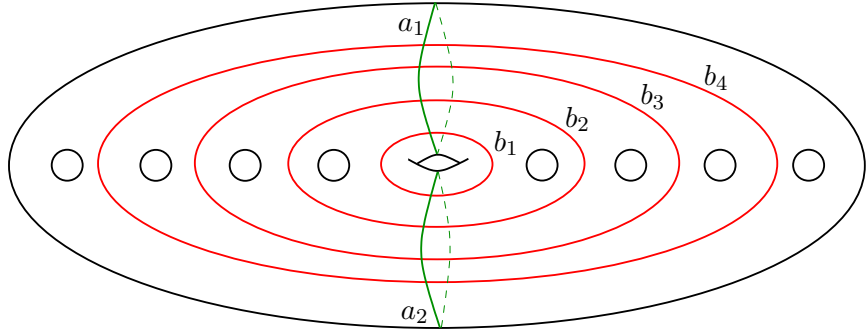}}}
  \relabel{1}{{$a_1$}}
  \relabel{2}{{$a_2$}}
\relabel{3}{{$b_1$}}
  \relabel{4}{{$b_2$}}
\relabel{5}{{$b_3$}}
  \relabel{6}{{$b_4$}}
  \endrelabelbox
                        \caption{The vanishing cycles $a_1, a_2, b_1, b_2, b_3, b_4$. }
                        \label{fig: genusoneab}
                    \end{center}
    \end{figure}

\begin{figure}[h]
                    \begin{center}
                         \relabelbox \small {\epsfxsize=4.5in
  \centerline{\epsfbox{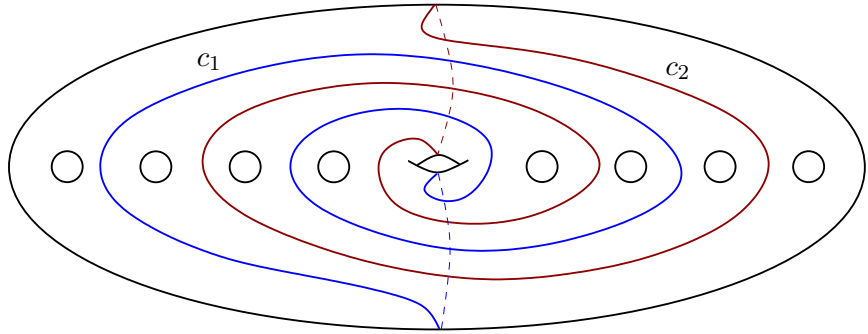}}}
  \relabel{1}{{$c_1$}}
  \relabel{2}{{$c_2$}}
 \endrelabelbox
                        \caption{The vanishing cycles $c_1$ and $c_2$.}
                        \label{fig: genusonec}
                    \end{center}
    \end{figure}

\subsection{General case} \label{subsec: geng}

The discussion in Section~\ref{subsec: genone} can be generalized to the case of an arbitrary closed, connected, orientable surface $\S$ of genus $g \geq 0$ as follows. There is a handle decomposition of $\S$ given by two $0$-handles, $2g+2$  twisted $1$-handles, and two $2$-handles, where we attach the $1$-handles to the $0$-handles analogous to the $g=1$ case (see Figure~\ref{fig: handledecom}).  The result is an orientable surface of genus $g$ with two boundary components. By attaching the $2$-handles, we obtain $\S$.

In the following, we describe a Lefschetz fibration $DT^*\S \to D^2$ of genus one, based on this handle decomposition of $\S$.
The regular fiber of  this Lefschetz fibration is obtained by plumbing $2g+2$ disjoint annuli $B_1, \ldots, B_{2g+2}$ corresponding to the $2g+2$  twisted $1$-handles, to the two disjoint annuli $A_1$ and $A_2$ corresponding to the $0$-handles, analogous to the $g=1$ case (see Figure~\ref{fig: vanishingab}).  The result is an  orientable surface of genus one with $4g+4$ boundary components.

The vanishing cycles of this Lefschetz fibration are obtained as follows.  Let $a_i$ denote the core circle of $A_i$ for  $i=1,2$ and $b_j$ denote the core circle of $B_j$ for $j=1, \ldots, 2g+2$. Let $c_1$ and $c_2$
denote the two curves on the fiber obtained by simultaneous  surgery of $a_1 \cup a_2$ and $b_1 \cup b_2 \cup  \cdots \cup b_{2g+2}$, at each point where they meet, analogous to the $g=1$ case (see Figure~\ref{fig: vanishingc}).  Therefore, we have the following result.

\begin{theorem} \label{thm: gencase} For any integer $g \geq 0$,  the disk cotangent bundle $DT^*\S$ admits a Lefschetz fibration over $D^2$,  whose regular fiber is a surface of genus one with $4g+4$ boundary components. The monodromy of this Lefschetz fibration is given by the composition of positive Dehn twists $$ D(a_1) D(a_2) D(b_1) D(b_2) \cdots  D(b_{2g+2})  D(c_1) D(c_2)$$ where the curves $a_1, a_2, b_1, b_2, \ldots , b_{2g+2}, c_1, c_2$  are depicted in Figures~\ref{fig: genusgab} and \ref{fig: genusgc} on a standard genus one surface with $4g+4$ boundary components.
\end{theorem}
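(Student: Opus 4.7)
The plan is to reduce Theorem~\ref{thm: gencase} to a direct application of Johns' construction, using the specific handle decomposition of $\S$ described just before the statement. Since the Morse-theoretic recipe has already been illustrated in full in the $g=1$ case (Proposition~\ref{prop: torus}), the proof will mostly consist of verifying that the combinatorial pattern observed there persists uniformly in $g$.

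First, I would fix the handle decomposition of $\S$ into two $0$-handles, $2g+2$ twisted $1$-handles, and two $2$-handles, arranged analogously to Figure~\ref{fig: handledecom}: the $1$-handles are attached in an alternating twisted pattern between the two $0$-handles so that the resulting subsurface (before the $2$-handles) is an orientable genus $g$ surface with two boundary components. Choose a Morse-Smale pair $(f,\mu)$ on $\S$ realizing this handle decomposition. Then Johns' theorem (recalled in Section~\ref{subsec: exact}) produces an exact symplectic Lefschetz fibration $\pi_f:(E,\omega)\to D^2$ and a conformal exact symplectomorphism $\phi:(E,\omega)\to(DT^*\S,\omega_{can})$, so that by Remark~\ref{rem: main} the map $\pi_f$ may be viewed as a Lefschetz fibration on $DT^*\S$ itself.

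Next I would unwind Johns' recipe (\cite[Section 4.3]{jo}) to read off the regular fiber and vanishing cycles from the handle data. For each $0$-handle one obtains an annulus $A_i$ ($i=1,2$); for each $1$-handle one plumbs a further annulus $B_j$ ($j=1,\ldots,2g+2$) onto $A_1\cup A_2$ at two squares, exactly as in the $g=1$ picture (Figure~\ref{fig: vanishingab}). Counting Euler characteristic, or simply stacking the plumbing model, confirms that the resulting surface has genus one and $4g+4$ boundary components. The first $2g+4$ vanishing cycles $a_1,a_2,b_1,\ldots,b_{2g+2}$ are then the core circles of the annuli, arising from the index $0$ and index $1$ critical points respectively. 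The remaining vanishing cycles correspond to the two index $2$ critical points, and by Johns' prescription are obtained by simultaneous surgery of $a_1\cup a_2$ with $b_1\cup\cdots\cup b_{2g+2}$ at all $4g+4$ intersection points, with the resolution rule indicated in Figure~\ref{fig: lagsur}.

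The main technical step, and the one I expect to be the chief obstacle, is verifying that this simultaneous surgery yields exactly two disjoint simple closed curves $c_1,c_2$ with the form shown in Figure~\ref{fig: genusgc}, rather than a different number of components, for all $g\geq 0$. I would check this inductively: at $g=1$ the picture (Figure~\ref{fig: vanishingc}) shows two components, and adding one further pair of twisted $1$-handles on each side inserts two new $B_j$ annuli and eight new resolution points, but the alternating left-turn rule of the surgery matches the alternation of twists in the handle attachments, so each additional pair of bands simply lengthens the two existing components without splitting or merging them. Once the two-component count is confirmed, the monodromy factorization follows immediately from Johns' general formula, which orders the positive Dehn twists by the indices of the critical points ($0$-handles, $1$-handles, then $2$-handles), producing exactly $D(a_1)D(a_2)D(b_1)\cdots D(b_{2g+2})D(c_1)D(c_2)$. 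Finally, transporting the curves to a standard embedding of a genus one surface with $4g+4$ boundary components gives the pictures in Figures~\ref{fig: genusgab} and \ref{fig: genusgc}, completing the proof.
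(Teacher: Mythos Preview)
Your proposal is correct and follows essentially the same approach as the paper: the paper's proof of Theorem~\ref{thm: gencase} is precisely the paragraph in Section~\ref{subsec: geng} preceding the statement, which applies Johns' recipe to the chosen handle decomposition and reads off the fiber and vanishing cycles by direct analogy with the $g=1$ case. If anything, you are more careful than the paper in flagging the verification that the simultaneous surgery produces exactly two components $c_1,c_2$; the paper simply asserts this ``analogous to the $g=1$ case'' without the inductive check you outline.
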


The next corollary immediately follows from Theorem~\ref{thm: gencase} coupled with Remark~\ref{rem: main}.

\begin{corollary} \label{cor: gen} For any integer $g \geq 0$, the unit cotangent bundle $ST^*\S$ admits an open book decomposition adapted to the
canonical contact structure $\xi_{can}$, whose page is a genus one surface with $4g+4$ boundary components. The monodromy of this open book decomposition is given by the composition of positive Dehn twists $$ D(a_1) D(a_2) D(b_1) D(b_2) \cdots  D(b_{2g+2})  D(c_1) D(c_2)$$ where the curves $a_1, a_2, b_1, b_2, \ldots , b_{2g+2}, c_1, c_2$  are depicted in Figures~\ref{fig: genusgab} and \ref{fig: genusgc}.
\end{corollary}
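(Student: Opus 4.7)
The plan is to deduce the corollary directly by combining Theorem~\ref{thm: gencase} with the general principle, recalled in Section~\ref{subsec: exact}, that a Lefschetz fibration on an exact symplectic $4$-manifold induces an open book on the boundary whose page is the regular fiber and whose monodromy is the product of positive Dehn twists along the vanishing cycles.

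First I would invoke Theorem~\ref{thm: gencase} to produce the Lefschetz fibration $\pi_f : DT^*\S \to D^2$ of genus one, with regular fiber a surface of genus one with $4g+4$ boundary components and with monodromy $D(a_1)D(a_2)D(b_1)\cdots D(b_{2g+2})D(c_1)D(c_2)$. Next I would recall from Section~\ref{subsec: exact} that for an exact symplectic Lefschetz fibration (which $\pi_f$ is, by Johns' construction), the restriction $\pi_f|_{\partial(DT^*\S) \setminus B} : \partial(DT^*\S) \setminus B \to \partial D^2$ is an open book decomposition of $ST^*\S$, with binding $B$ equal to the disjoint union of the boundary circles of the horizontal boundary strata, and the page is identified with the regular fiber (once the corners are rounded off). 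In particular the page has genus one and $4g+4$ boundary components, and the monodromy coincides with the Lefschetz monodromy.

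To see that this open book supports $\xi_{can}$, I would appeal to Remark~\ref{rem: main}: Johns' construction furnishes a conformal exact symplectomorphism $\phi : (E,\omega) \to (DT^*\S, \omega_{can})$ with $\phi^*\lambda_{can} = C\lambda + dh$, hence $\phi$ carries the Liouville $1$-form $\lambda$ (up to a positive scalar and an exact $1$-form) to $\lambda_{can}$. Since the contact form induced on $\partial E$ by $\lambda$ supports the open book arising from $\pi_f$, and since adding an exact $1$-form or rescaling by a positive constant does not change the underlying contact structure, the open book induced on $ST^*\S$ supports $\ker\lambda_{can} = \xi_{can}$.

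There is no real obstacle here: the corollary is essentially a translation of the Lefschetz fibration statement into the contact/open-book language, and all the nontrivial work (the explicit identification of the vanishing cycles, and the conformal exact symplectomorphism with $DT^*\S$) has already been carried out in Theorem~\ref{thm: gencase} and Remark~\ref{rem: main}. The only mild bookkeeping step is to note that the corners of $E$ must be rounded off before speaking of an honest open book on $ST^*\S$, but this is a standard operation (cf.\ \cite[Lemma 7.6]{s}) and does not affect the page topology or monodromy.
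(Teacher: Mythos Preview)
Your proposal is correct and takes essentially the same approach as the paper, which simply states that the corollary follows immediately from Theorem~\ref{thm: gencase} coupled with Remark~\ref{rem: main}. You have merely unpacked in more detail what that coupling entails---the induced open book from Section~\ref{subsec: exact} and the conformal exact symplectomorphism argument---so there is no substantive difference.
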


\begin{figure}[h]
                    \begin{center}
                         \relabelbox \small {\epsfxsize=5in
  \centerline{\epsfbox{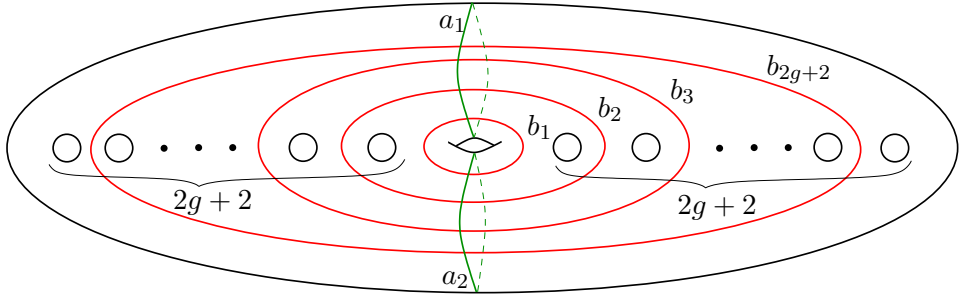}}}
  \relabel{1}{{$a_1$}}
  \relabel{2}{{$a_2$}}
\relabel{3}{{$b_1$}}
  \relabel{4}{{$b_2$}}
\relabel{5}{{$b_3$}}
  \relabel{6}{{$b_{2g+2}$}}
  \relabel{7}{{$2g+2$}}
  \relabel{8}{{$2g+2$}}
   \endrelabelbox
                        \caption{The vanishing cycles $a_1, a_2, b_1, b_2, b_3, \ldots, b_{2g+2}$. }
                        \label{fig: genusgab}
                    \end{center}
    \end{figure}

  \begin{figure}[h]
                    \begin{center}
                         \relabelbox \small {\epsfxsize=5in
  \centerline{\epsfbox{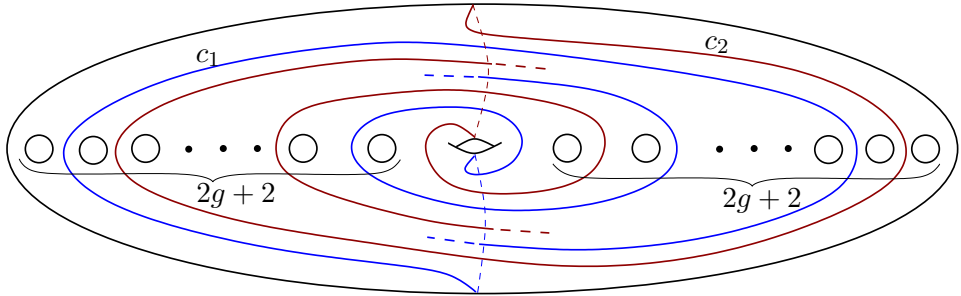}}}
  \relabel{1}{{$c_1$}}
  \relabel{2}{{$c_2$}}
  \relabel{3}{{$2g+2$}}
  \relabel{4}{{$2g+2$}}
   \endrelabelbox
                        \caption{The vanishing cycles $c_1$ and $c_2$. }
                        \label{fig: genusgc}
                    \end{center}
    \end{figure}

 Note that for $g=0,1$,  the results above are known to be not optimum in the following sense. For $g=0$, there is a {\em planar} Lefschetz fibration  $DT^*S^2 \to D^2$, whose regular fiber is the annulus and whose monodromy is the square of the positive Dehn twist along the core circle. The restriction of this Lefschetz fibration to the boundary gives a {\em planar}  open book decomposition of $ ST^*S^2 \cong \mathbb{RP}^3$ adapted to $\xi_{can}$. In other words, the support genus of $(\mathbb{RP}^3, \xi_{can})$ is zero, while its binding number is equal to two (cf. \cite{eo}).

 As we have already mentioned in the Introduction, the contact $3$-manifold $(ST^* \S, \xi_{can})$ is not planar for $g \geq 1$.  For $g=1$, Van
        Horn-Morris \cite{vhm} constructed an explicit open book  decomposition adapted to $(ST^* T^2 \cong T^3, \xi_{can})$ whose page is a genus one surface with {\em three} boundary components. Therefore, the support genus of $(T^3, \xi_{can})$ is one, while its binding number is less than or equal to three. As observed by Massot \cite{m}, and confirmed via different methods by our Corollary~\ref{cor: gen}, for any $g \geq 1$, the support genus of $(ST^* \S, \xi_{can})$ is one and the binding number is less than or equal to $4g+4$. Therefore the following question appears naturally:

 \begin{question} What is the binding number of $(ST^* \S, \xi_{can})$ for $g \geq 1$? \end{question}

Note that  $DT^*T^2 \cong T^2 \times D^2$  does not  admit a planar Lefschetz fibration over $D^2$, even if we do not impose any boundary conditions. Suppose, otherwise,  that $T^2 \times D^2$ admits a planar Lefschetz fibration over $D^2$. Then we would have a planar strongly symplectically fillable contact structure on the boundary  $\del (T^2 \times D^2) \cong T^3$. This gives a contradiction since $\xi_{can}$  is the  unique  strongly symplectically  fillable contact structure on $T^3$ by Eliashberg \cite{el}, and it is non-planar \cite{et}.


\section{Another genus one Lefschetz fibration on the disk cotangent bundle of an orientable surface} \label{sec: tanlef}

\subsection{Lefschetz fibrations on disk cotangent bundles} \label{sec: ish}


In \cite[Proposition 3.1]{i}, Ishikawa constructs a Lefschetz fibration on the disk {\em tangent}  bundle of an orientable surface $\S$, based on the choice of a Morse function on $\S$. His construction is based on Lemma 3.2, which in turn relies on Lemma 2.6 in his paper. We would like to point out that there is an orientation error in Lemma 2.6 of Ishikawa's paper.  His choice of complex charts in Lemma 2.6 is {\em orientation-reversing} for the tangent bundle rather than orientation-preserving. Therefore, the Lefschetz fibration he constructs in Proposition 3.1 on the disk tangent bundle $DT\S$ is  {\em{\bf achiral}}, i.e., all Dehn twists are left-handed. By reversing the orientation of the total space we get a Lefschetz fibration on the disk {\em cotangent}  bundle $DT^*\S$. Thus, we conclude that Ishikawa in fact constructs Lefschetz fibrations on $DT^*\S$ rather than $DT\S$.
\begin{remark} Here is another way to see the error in Ishikawa'a paper \cite{i}. For $\S= S^2$, Ishikawa's method would give a Lefschetz fibration on the disk tangent bundle of $S^2$, which is the $D^2$-bundle over $S^2$ with Euler number $+2$. This would imply that the $D^2$-bundle over $S^2$ with Euler number $+2$ admits a Stein structure (cf. \cite{ao,lp}) which, indeed,  contradicts to the adjunction inequality of Lisca and Mati\'{c} \cite{lm} for Stein surfaces.

\end{remark}

With this caveat in mind, we briefly describe Ishikawa's construction of a Lefschetz fibration on $DT^*\S$.
To construct his fibration,  Ishikawa starts with  any {\em admissible divide} $P$ on $\S$.  For the purposes of the present paper,  a divide $P \subset \S$ is a generic immersion of the disjoint union of finitely many copies of the unit circle.   A divide $P$ is called admissible if it is connected, each component of $\S \setminus P$  is simply connected and $P$ admits a checkerboard coloring, which means that one can assign black or white color to each component of $\S \setminus P$, such
that any two neighboring components are assigned opposite colors.

Based on an admissible divide $P$ on $\S$,  there is a Morse function $f_P : \S \to \mathbb{R}$ associated with $P$, which essentially means that the zero level set of $f_P$ coincides with $P$, each double point of $P$ corresponds to a critical point of $f_P$ of index one, and  each black (resp. white) region of $\S \setminus P$ contains an index two (resp. zero) critical point of $f_P$. The Morse function $f_P$,  in turn,  gives an ``almost complexified Morse function" $F_P : T^*\S \to \mathbb{C}$ which  descends to a Lefschetz fibration $\pi_P : DT^*\S \to D^2$.   Moreover, generalizing the work of A'Campo \cite{ac}, Ishikawa describes  how to obtain the regular fiber and the monodromy of the Lefschetz fibration $\pi_P$, based only on the divide $P$.

In the following, by choosing a particular admissible divide $P$ on $\S$ and applying Ishikawa's method, we obtain an explicit Lefschetz fibration $DT^*\S \to D^2$ in Theorem~\ref{thm: gene}, with $2g+6$  vanishing cycles, whose regular fiber is a genus one surface with $4g+4$ boundary components. To illustrate the method of construction, we first give in Section~\ref{subsec: one} a  detailed treatment when $\S$ is a closed, orientable surface of genus one.


\subsection{The case of $T^2$} \label{subsec: one}
In this subsection, by choosing a particular admissible divide on the torus $T^2$, we construct an explicit Lefschetz fibration $ DT^* T^2  \to D^2$ of genus one.
The admissible divide $P$ we have in mind is the union of the four curves $P_1, P_2, P_3, P_4$ on $T^2$  intersecting as in Figure~\ref{fig: divideone}.  The complement $T^2 \setminus P$ has four connected components each of which is a disk. We assign a checkerboard  coloring to $T^2 \setminus P$ as follows. The component bounded by the bold curves, facing the reader in Figure~\ref{fig: divideone}, is assigned the white color, which in turn,  determines the color of the remaining three components of $T^2 \setminus P$, since neighboring components should have opposite colors. Based on this choice of $P$,  there is a Lefschetz fibration $DT^* T^2  \to D^2$. The fiber of this Lefschetz fibration can be constructed via A'Campo's method \cite[page 15]{ac} as follows: We start with a roundabout, as depicted in Figure~\ref{fig: roundab},  for each of the four double points of the divide $P$.

\begin{figure}[h]
                    \begin{center}
                         \relabelbox \small {\epsfxsize=3.2in
  \centerline{\epsfbox{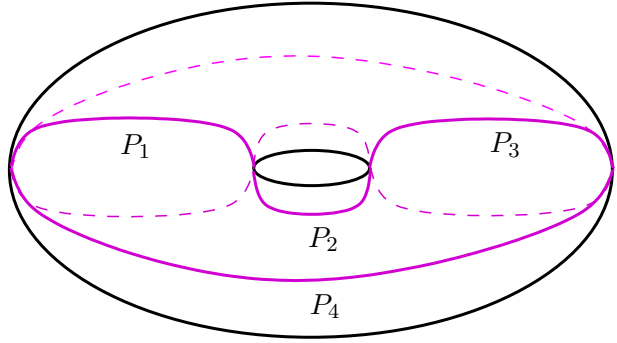}}}
  \relabel{1}{{$P_1$}}
  \relabel{2}{{$P_2$}}
  \relabel{3}{{$P_3$}}
  \relabel{4}{{$P_4$}}

   \endrelabelbox
                        \caption{The divide $P = P_1 \cup  P_2 \cup  P_3 \cup  P_4$ on $T^2$.}
                        \label{fig: divideone}
                    \end{center}
    \end{figure}

\begin{figure}[h]
                    \begin{center}
                         \relabelbox \small {\epsfxsize=2.2in
  \centerline{\epsfbox{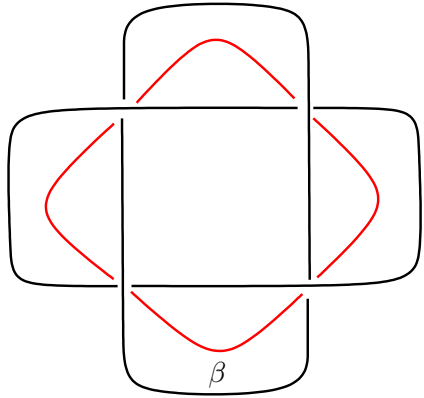}}}
 \relabel{1}{{$\b$}}
 \endrelabelbox
                        \caption{The roundabout:  an annulus embedded in $\mathbb{R}^3$, with its core circle $\b$.}
                        \label{fig: roundab}
                    \end{center}
    \end{figure}

For each edge in $P$ connecting any two double points, we insert a half-twisted band connecting the corresponding roundabouts. As a result, we get a genus one surface with eight boundary components as shown in Figure~\ref{fig: roundvanish}.

\begin{figure}[h]
                    \begin{center}
                         \relabelbox \small {\epsfxsize=4.7in
  \centerline{\epsfbox{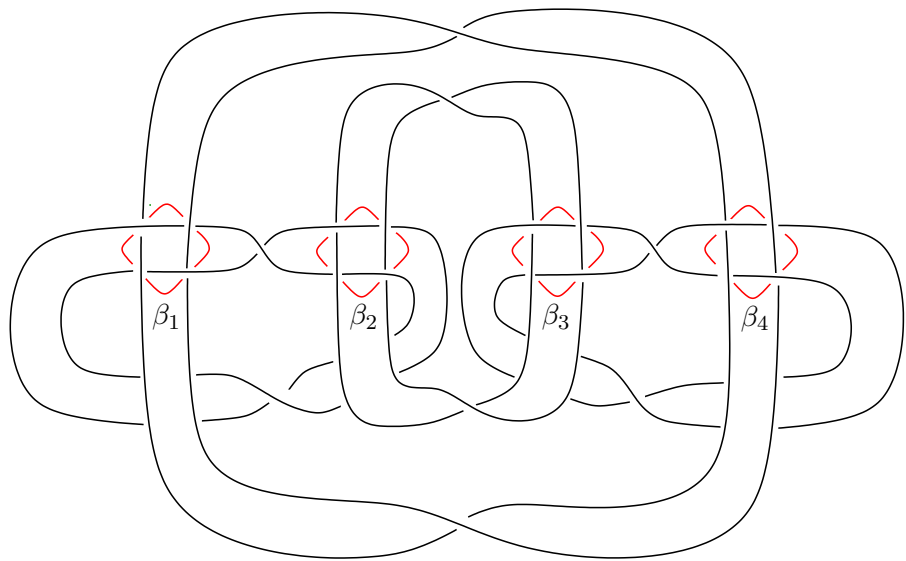}}}
 \relabel{1}{{$\b_1$}}
  \relabel{2}{{$\b_2$}}
  \relabel{3}{{$\b_3$}}
  \relabel{4}{{$\b_4$}}
 \endrelabelbox
                        \caption{The vanishing cycles $\b_1, \b_2, \b_3, \b_4$.}
                        \label{fig: roundvanish}
                    \end{center}
    \end{figure}

    \begin{figure}[h]
                    \begin{center}
                         \relabelbox \small {\epsfxsize=5in
  \centerline{\epsfbox{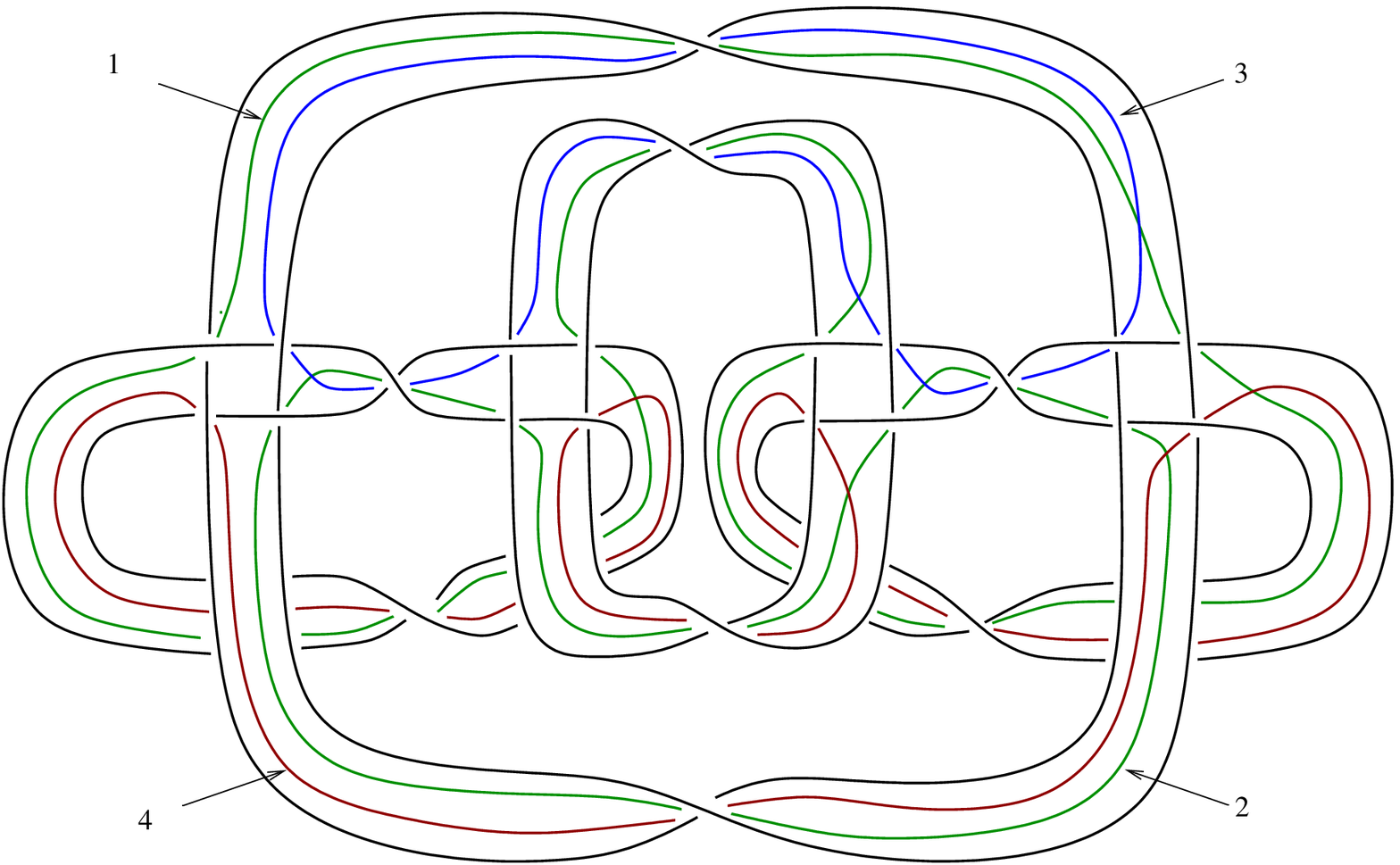}}}
\relabel{1}{{$\a_2$}}
  \relabel{2}{{$\a_1$}}
  \relabel{3}{{$\g_1$}}
  \relabel{4}{{$\g_2$}}

   \endrelabelbox
                        \caption{The vanishing cycles $\a_1, \a_2, \g_1, \g_2$.}
                        \label{fig: roundvanish2}
                    \end{center}
    \end{figure}

The monodromy of this Lefschetz fibration $DT^* T^2 \to D^2$ is given by the product of eight positive Dehn twists along the curves $\a_1, \a_2, \b_1, \b_2, \b_3, \b_4, \g_1, \g_2$. The curves $\b_1, \b_2, \b_3, \b_4$ are the core circles of the four roundabouts in Figure~\ref{fig: roundvanish}. The curves $\a_1$ and $\a_2$ are given as the boundary of the two white regions in the checkerboard  coloring we fixed above, while the curves $\g_1$ and $\g_2$ are  given as the boundary of the two black regions. We depicted the curves $\a_1, \a_2, \g_1, \g_2$ in Figure~\ref{fig: roundvanish2}. We summarize our discussion in Proposition~\ref{prop: tor} below.

\begin{proposition} \label{prop: tor} There exists a Lefschetz fibration $DT^* T^2  \cong T^2 \times D^2  \to D^2$, whose regular fiber is a genus one surface with eight boundary components. The monodromy of this Lefschetz fibration is given by the composition of positive Dehn twists $$ D(\a_1) D(\a_2) D(\b_1) D(\b_2) D(\b_3) D(\b_4) D(\g_1) D(\g_2)$$ where the curves $\a_1, \a_2, \b_1, \b_2, \b_3, \b_4, \g_1, \g_2$ are depicted in  Figures~\ref{fig: roundvanish} and \ref{fig: roundvanish2} on a genus one surface with eight boundary components.
\end{proposition}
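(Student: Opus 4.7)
The plan is to apply Ishikawa's construction \cite{i} directly to the specific admissible divide $P = P_1 \cup P_2 \cup P_3 \cup P_4$ on $T^2$ shown in Figure~\ref{fig: divideone}, with the orientation correction from Section~\ref{sec: ish} so that the output is a genuine positive Lefschetz fibration on $DT^*T^2$ rather than an achiral one on $DTT^2$. First I would verify that $P$ is admissible in Ishikawa's sense: by inspection $P$ is connected, the four components of $T^2 \setminus P$ are each open disks (so in particular simply connected), and the checkerboard coloring fixed in the text is consistent. Ishikawa's Proposition~3.1, applied with the orientation correction, then produces via the almost-complexified Morse function $F_P$ associated with $P$ a Lefschetz fibration $\pi_P : DT^*T^2 \to D^2$, and since $T^2$ is parallelizable the total space is diffeomorphic to $T^2 \times D^2$.

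Next I would construct the regular fiber of $\pi_P$ via A'Campo's recipe \cite{ac} as generalized by Ishikawa. The divide $P$ has four double points, so we start with four disjoint roundabouts (embedded annuli of the type in Figure~\ref{fig: roundab}). The complement $T^2 \setminus P$ has four regions and each double point has valence four, so $P$ contains eight edges joining pairs of double points; for each such edge we plumb a half-twisted band between the corresponding two roundabouts, producing the surface of Figure~\ref{fig: roundvanish}. An Euler characteristic count $\chi = 4 \cdot 0 + 8 \cdot (-1) = -8$ combined with orientability shows this is a genus one surface with eight boundary components.

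Finally I would read off the vanishing cycles from Ishikawa's recipe. For each of the four double points of $P$, the core circle $\b_i$ of the corresponding roundabout is a vanishing cycle; for each of the four regions of $T^2 \setminus P$, the boundary curve obtained by traversing the edges bordering that region through the adjacent roundabouts is an additional vanishing cycle, namely $\a_1,\a_2$ for the two white regions and $\g_1,\g_2$ for the two black regions, as depicted in Figure~\ref{fig: roundvanish2}. The monodromy of $\pi_P$ is then the product of the positive Dehn twists along these eight curves in the stated order, which is exactly the factorization in the proposition.

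The main subtlety, and the step requiring the most care, is orienting things so that all Dehn twists are right-handed. Ishikawa's recipe as printed in \cite{i} produces left-handed twists on $DTT^2$ because of the orientation error in his Lemma~2.6; the point of the correction in Section~\ref{sec: ish} is that reversing the orientation of the total space simultaneously converts the disk tangent bundle into the disk cotangent bundle and flips each left-handed Dehn twist into a right-handed one. Thus what Ishikawa's recipe actually delivers for the divide $P$ above is the positive Lefschetz fibration on $DT^*T^2$ asserted by Proposition~\ref{prop: tor}.
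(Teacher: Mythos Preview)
Your proposal is correct and follows essentially the same route as the paper: the proposition is simply a summary of the preceding discussion in Section~\ref{subsec: one}, which applies Ishikawa's construction (with the orientation correction of Section~\ref{sec: ish}) to the specific admissible divide $P$ on $T^2$, builds the fiber via A'Campo's roundabout recipe, and reads off the vanishing cycles from the double points and the colored regions exactly as you describe. One small quibble: the Euler characteristic $\chi=-8$ together with orientability does not by itself determine both the genus and the number of boundary components, so you should supplement that step by directly counting the eight boundary circles in Figure~\ref{fig: roundvanish}.
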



\begin{remark} There is no a priori reason for the existence of a fiberwise diffeomorphism between the Lefschetz fibration $DT^*\S\to D^2$ described in Theorem~\ref{thm: gencase} and the Lefschetz fibration $DT^*\S \to D^2$ described in Theorem~\ref{thm: gene} that takes the $2g+6$  vanishing cycles  of the former to that of the latter. In Proposition~\ref{prop: isom},  we provide such a diffeomorphism for the case $g=1$.  A general statement for any $g \geq 0$ is given in Theorem~\ref{thm: isomgene} below, whose proof is analogous to the $g=1$ case.  \end{remark}

\begin{proposition} \label{prop: isom} The Lefschetz fibration $DT^* T^2   \to D^2$ described in Proposition~\ref{prop: torus} and the Lefschetz fibration $DT^* T^2   \to D^2$ described in  Proposition~\ref{prop: tor} are isomorphic.
\end{proposition}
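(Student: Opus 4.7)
The plan is to produce an orientation-preserving diffeomorphism $\Phi$ of a genus one surface with eight boundary components that sends the unordered octuple of vanishing cycles $(a_1,a_2,b_1,\dots,b_4,c_1,c_2)$ from Proposition~\ref{prop: torus} to the octuple $(\a_1,\a_2,\b_1,\dots,\b_4,\g_1,\g_2)$ from Proposition~\ref{prop: tor}, possibly after reordering within each of the three packets. Since the curves inside each packet are pairwise disjoint, the Dehn twists inside a packet commute, so such a $\Phi$ identifies the two monodromy factorizations as elements of the mapping class group of the fiber. This is exactly the data needed to conclude that the two Lefschetz fibrations on $DT^*T^2$ are isomorphic.

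My first step is to match the underlying fiber surfaces combinatorially. In Johns' construction the fiber is assembled from two horizontal annuli $A_1,A_2$ and four vertical annuli $B_1,\dots,B_4$ plumbed at the eight overlapping squares of Figure~\ref{fig: vanishingab}. In Ishikawa's construction the fiber is assembled from four roundabouts (one at each double point of the divide $P$) joined by eight half-twisted bands (one per edge of $P$). Both procedures produce a bipartite plumbing of six annuli meeting in eight plumbing squares, with bipartition of sizes $2$ and $4$. I would identify the four roundabouts of Figure~\ref{fig: roundvanish} with the four vertical annuli $B_j$, and the two ``horizontal'' unions of half-twisted bands (namely those running along $P_1$ and $P_3$) with the two horizontal annuli $A_1,A_2$. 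This gives an explicit diffeomorphism $\Phi$ carrying the core $\b_j$ of each roundabout to the core $b_j$ of $B_j$, and carrying the two ``horizontal spine'' curves $\a_1,\a_2$ to the cores $a_1,a_2$ of $A_1,A_2$.

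It remains to verify that under this same $\Phi$ the unordered pair $\{\g_1,\g_2\}$ corresponds to $\{c_1,c_2\}$. The curves $c_k$ are obtained by the simultaneous left-turning surgery of $a_1\cup a_2$ with $b_1\cup\cdots\cup b_4$ at their eight plumbing-square intersections (Figure~\ref{fig: lagsur}), while $\g_1,\g_2$ are the boundaries of the two black regions in the checkerboard coloring of $T^2\setminus P$. Locally at each plumbing square the black region occupies a fixed pair of opposite quadrants, and I would check that the left-turn resolution of $a_i\cap b_j$ traces out precisely the two arcs hugging those black quadrants. Concatenating these eight local identifications produces a global equality $\Phi^{-1}(\{\g_1,\g_2\})=\{c_1,c_2\}$.

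The main obstacle will be this final step: confirming that the surgery recipe and the region-boundary recipe produce the same curves on the nose rather than merely isotopic ones, and doing so with consistent orientation conventions so that ``left turn'' and ``adjacent to a black region'' refer unambiguously to the same side at every square. Once this square-by-square matching is in place, the three packet-wise correspondences $\Phi(a_i)=\a_i$, $\Phi(b_j)=\b_{\sigma(j)}$, $\Phi(c_k)=\g_{\tau(k)}$ combine with the commutativity of Dehn twists within each packet to yield the isomorphism of Lefschetz fibrations asserted in Proposition~\ref{prop: isom}.
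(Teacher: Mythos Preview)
Your approach is essentially identical to the paper's: both build an explicit fiber diffeomorphism by recognizing Ishikawa's surface as a plumbing of two annuli (annular neighborhoods of $\a_1,\a_2$) with the four roundabouts (neighborhoods of $\b_j$), matching Johns' plumbing of $A_i$ with $B_j$, and both finish by observing that $\g_1,\g_2$ arise from the very same simultaneous surgery of $\a_1\cup\a_2$ with $\b_1\cup\cdots\cup\b_4$ that produces $c_1,c_2$, so the diffeomorphism carries one pair to the other automatically. One small imprecision worth correcting: the $\a_i$ are boundaries of the two \emph{white regions} of the checkerboard coloring, so their annular neighborhoods are assembled from half-twisted bands coming from edges of several $P_j$'s rather than ``running along $P_1$ and $P_3$''; the paper makes this explicit in its Figure~\ref{fig: annul}.
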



\begin{proof} Let $\pi_1$ denote the Lefschetz fibration $DT^* T^2  \to D^2$ described in Proposition~\ref{prop: torus} and $\pi_2$ denote the Lefschetz fibration $DT^* T^2   \to D^2$ described in  Proposition~\ref{prop: tor}. The regular fibers of $\pi_1$ and $\pi_2$ are clearly diffeomorphic as abstract surfaces. In the following, we establish an explicit diffeomorphism between these fibers which also preserves the corresponding vanishing cycles.

Recall that the fiber (see Figure~\ref{fig: vanishingab})  of $\pi_1$ is obtained simply by plumbing  the four disjoint annuli
$B_1 \cup  B_2 \cup B_3 \cup B_4$ onto two disjoint annuli $A_1 \cup A_2$, along the eight overlapping squares. Moreover, the core circle of each annulus is a vanishing cycle. In the following, we show that the fiber (shown in Figure~\ref{fig: roundvanish2})  of $\pi_2$  is obtained exactly in the same way.

\begin{figure}[h]
                    \begin{center}
                         \relabelbox \small {\epsfxsize=4in
  \centerline{\epsfbox{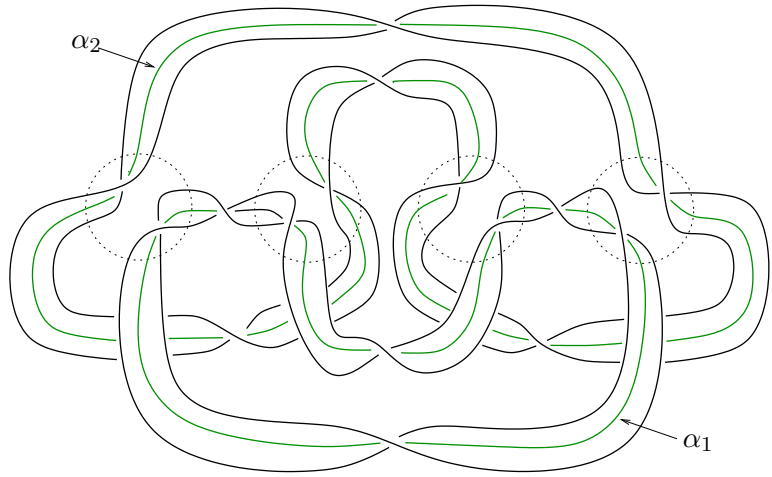}}}
\relabel{1}{{$\a_2$}}
  \relabel{2}{{$\a_1$}}

   \endrelabelbox
                        \caption{The neighborhoods of the vanishing cycles $\a_1$ and $\a_2$.}
                        \label{fig: annul}
                    \end{center}
    \end{figure}

For $i=1,2$, a neighborhood of the vanishing cycle $\a_i$ on the regular fiber of $\pi_2$, which is shown in Figure~\ref{fig: roundvanish2}, is indeed an annulus as we depicted again in Figure~\ref{fig: annul}. Note that to emphasize the neighborhoods of $\a_1$ and $\a_2$,  we erased part of the fiber in Figure~\ref{fig: roundvanish2}, which we indicated by the dotted circles in Figure~\ref{fig: annul}. Similarly, for $j=1,2,3,4$, a neighborhood of the curve $\b_j$ on the fiber in Figure~\ref{fig: roundvanish} is a roundabout. Now we claim that the fiber in Figure~\ref{fig: roundvanish} (or Figure~\ref{fig: roundvanish2}) can be obtained by plumbing the four disjoint roundabouts, each one is a neighborhood of $\b_j$, onto the disjoint union of the neighborhoods of $\a_1$ and $\a_2$ depicted in Figure~\ref{fig: annul}. To prove our claim, we illustrate in Figure~\ref{fig: round} the result of plumbing a roundabout onto the disjoint neighborhoods of $\a_1$  and $\a_2$ inside a dotted circle.

     \begin{figure}[h]
                    \begin{center}
                         \relabelbox \small {\epsfxsize=3.5in
  \centerline{\epsfbox{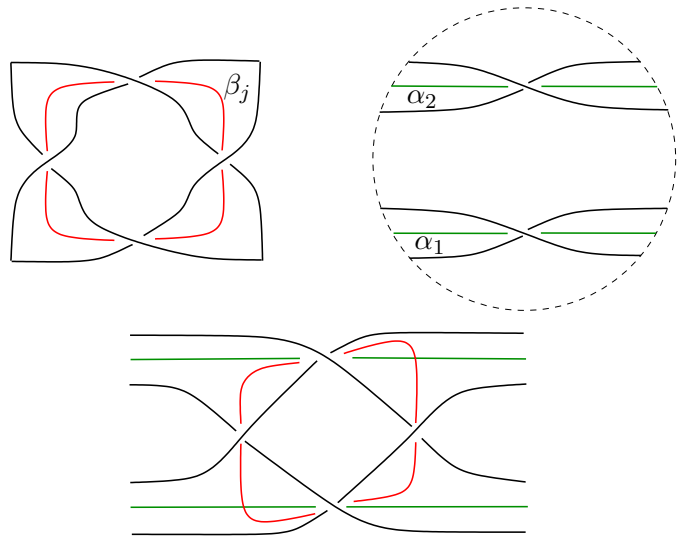}}}
\relabel{1}{{$\b_j$}}
\relabel{2}{{$\a_2$}}
  \relabel{3}{{$\a_1$}}
\endrelabelbox
                        \caption{Plumbing (bottom) the roundabout (top left) onto the annuli neighborhoods of $\a_1$ and $\a_2$ (top right).}
                        \label{fig: round}
                    \end{center}
    \end{figure}

Note that inside each one of the four dotted circles in Figure~\ref{fig: annul}, there are two disjoint ``twisted squares", and each plumbing takes place inside one of these circles. Therefore, to establish a diffeomorphism between the regular fiber of $\pi_2$  and the regular  the fiber of $\pi_1$, we identify the neighborhood of $\a_i$ with the annulus $A_i$, which is a neighborhood of $a_i$ and the neighborhood of $\b_j$ with the annulus $B_j$, which a neighborhood of $b_j$. Our discussion so far  shows that the regular fiber of $\pi_2$  is diffeomorphic to the regular fiber of $\pi_1$  by a diffeomorphism sending $\a_i$ to $a_i$ for $i=1,2$ and $\b_j$ to $b_j$ for $j=1,2,3,4$.

 Finally, recall that the last vanishing cycles $c_1$ and $c_2$ of $\pi_1$ are obtained by simultaneous  surgery of $a_1 \cup a_2$ and $b_1 \cup b_2 \cup b_3 \cup b_4$, at each point where they meet. We just observe that
 $\g_1$ and $\g_2$  are also obtained by simultaneous surgery of $\a_1 \cup \a_2$ and $\b_1 \cup \b_2 \cup \b_3 \cup \b_4$, at each point where they meet. We conclude that the diffeomorphism above takes $\g_i$ to $c_i$ as well for $i=1,2$. Therefore, there is an isomorphism between the two genus one Lefschetz fibrations  $\pi_2: DT^* T^2 \to D^2$ and $\pi_1: DT^* T^2   \to D^2$.
\end{proof}

\subsection{General case} \label{subsec: general}  The discussion in Section~\ref{subsec: one} can be generalized to the case of an arbitrary closed, connected, orientable surface $\S$ of genus $g \geq 0$ as follows. We start with the admissible divide $P$ on $\S$ given in Figure~\ref{fig: dividegen}.
\begin{figure}[h]
                    \begin{center}
                         \relabelbox \small {\epsfxsize=4.7in
  \centerline{\epsfbox{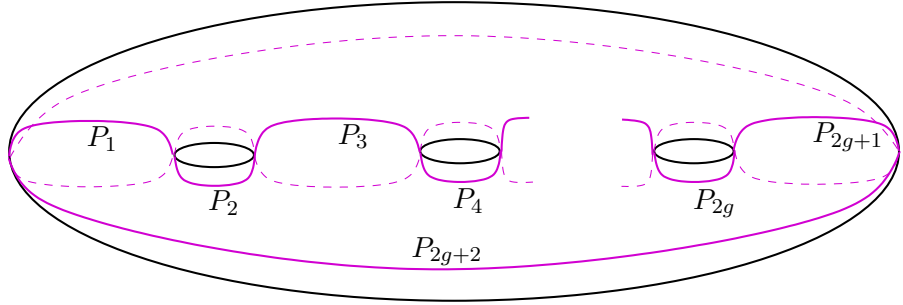}}}
  \relabel{1}{{$P_1$}}
  \relabel{2}{{$P_2$}}
  \relabel{3}{{$P_3$}}
\relabel{4}{{$P_4$}}
   \relabel{5}{{$P_{2g}$}}
   \relabel{6}{{$P_{2g+1}$}}
   \relabel{7}{{$P_{2g+2}$}}

  \endrelabelbox
                        \caption{The divide $P = P_1 \cup  P_2 \cup  \cdots  \cup  P_{2g+2}$ on $\S$.}
                        \label{fig: dividegen}
                    \end{center}
    \end{figure}

Just as in the case of genus one, $\S \setminus P$ has four connected components, each of which is a disk and $\S \setminus P$ admits a checkerboard coloring.  Based on this choice of $P$,  there is a Lefschetz fibration $DT^* \S  \to D^2$. The fiber of this Lefschetz fibration can be constructed  as described in Section~\ref{subsec: one}:  We start with a roundabout for each double point of the divide $P$, and for each edge in $P$ connecting any two double points, we insert a half-twisted band connecting the corresponding roundabouts. As a result, we get a genus one surface with $4g+4$ boundary components, as shown in Figure~\ref{fig: roundvanishdeneme}.

  \begin{figure}[h]
                    \begin{center}
                         \relabelbox \small {\epsfxsize=6in
  \centerline{\epsfbox{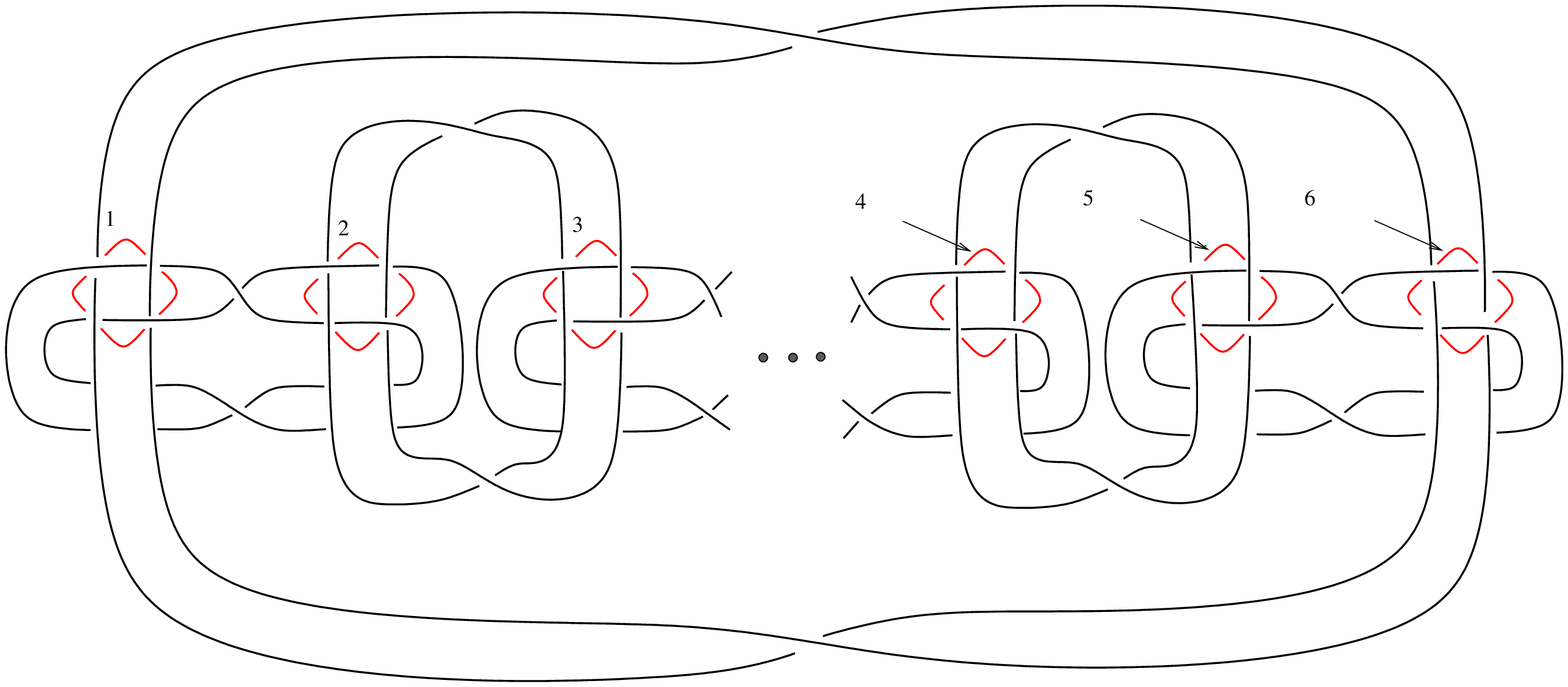}}}
\relabel{1}{{$\b_1$}}
  \relabel{2}{{$\b_2$}}
  \relabel{3}{{$\b_3$}}
  \relabel{4}{{$\b_{2g}$}}
  \relabel{5}{{$\b_{2g+1}$}}
  \relabel{6}{{$\b_{2g+2}$}}
   \endrelabelbox
                        \caption{The vanishing cycles $\ \b_1,  \b_2, \ldots, \b_{2g+2}$.}
                        \label{fig: roundvanishdeneme}
                    \end{center}
    \end{figure}

  \begin{figure}[h]
                    \begin{center}
                         \relabelbox \small {\epsfxsize=6in
  \centerline{\epsfbox{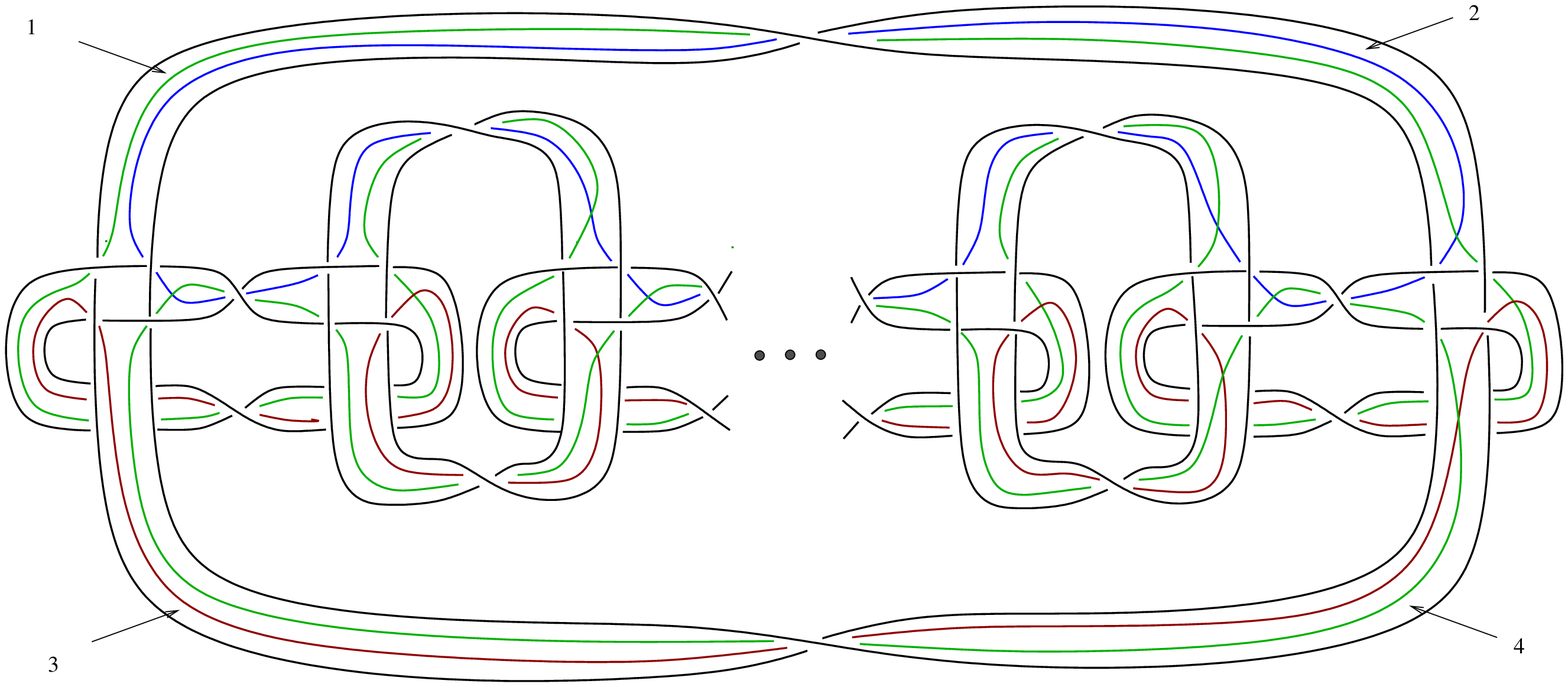}}}
\relabel{1}{{$\a_2$}}
  \relabel{2}{{$\g_1$}}
  \relabel{3}{{$\g_2$}}
  \relabel{4}{{$\a_1$}}

   \endrelabelbox
                        \caption{The vanishing cycles $\a_1, \a_2, \g_1, \g_2$.}
                        \label{fig: roundvanishgenusg}
                    \end{center}
    \end{figure}

Moreover, the monodromy of this Lefschetz fibration $DT^* \S \to D^2$ is given by the product of $2g+6$ positive Dehn twists along the curves $\a_1,  \a_2,  \b_1,  \b_2, \ldots, \b_{2g+2},  \g_1,  \g_2$. The curves $\b_1, \b_2, \ldots, \b_{2g+1}, \b_{2g+2}$ are the core circles of the roundabouts in Figure~\ref{fig: roundvanishdeneme}. The curves $\a_1$ and $\a_2$ are given as the boundary of the two white regions in the checkerboard  coloring we fixed above, while the curves $\g_1$ and $\g_2$ are  given as the boundary of the two black regions. We depicted the curves $\a_1, \a_2, \g_1, \g_2$ in Figure~\ref{fig: roundvanishgenusg}.

The proofs of Theorem~\ref{thm: gene} and Theorem~\ref{thm: isomgene}  are completely analogous to the proofs of Proposition~\ref{prop: tor} and  Proposition~\ref{prop: isom}, respectively. One crucial observation is that $\g_1$ and $\g_2$  are obtained as a result of the simultaneous surgery of $\a_1 \cup \a_2$ and $\b_1 \cup \b_2 \cup \cdots \cup \b_{2g+2}$, at each point where they meet.
The reader can verify this directly for the curves depicted in Figures~\ref{fig: roundvanishdeneme} and ~\ref{fig: roundvanishgenusg}. This fact can also be verified as follows: The $\a$ curves
are given as the boundaries of the two zero handles on $\S$. The $\b$ curves are
then formed from the union of the cores of the one handles joined together in
the zero handles. The result of surgery between the $\a$  and $\b$ curves yields a
curve isotopic to the boundary of the union of the $0$-handles and $1$-handles, in
other words the attaching circles for the $2$-handles. This also explains how the
constructions of Ishikawa and Johns relate to the Morse function on $\S$  in the
same way (see \cite[page 69]{jo}).

\begin{theorem} \label{thm: gene} For any integer $g \geq 0$,  the disk cotangent bundle $DT^* \S$ admits a Lefschetz fibration over $D^2$,  whose regular fiber is a genus one surface with $4g+4$ boundary components. The monodromy of this Lefschetz fibration is given by the composition of positive Dehn twists $$ D(\a_1) D(\a_2) D(\b_1) D(\b_2)\cdots D(\b_{2g+2}) D(\g_1) D(\g_2)$$  where the curves $\b_1, \b_2, \ldots, \b_{2g+1}, \b_{2g+2}$ are shown in Figure~\ref{fig: roundvanishdeneme} and $ \a_1,  \a_2,   \g_1,  \g_2$ are shown in Figure~\ref{fig: roundvanishgenusg}.
\end{theorem}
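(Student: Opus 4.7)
The plan is to deduce Theorem~\ref{thm: gene} as a direct application of Ishikawa's construction to the admissible divide $P$ drawn in Figure~\ref{fig: dividegen}, following the exact pattern of the $g=1$ case worked out in Proposition~\ref{prop: tor}. I would first verify that $P = P_1 \cup \cdots \cup P_{2g+2}$ is indeed an admissible divide on $\Sigma$: the union is connected by inspection of Figure~\ref{fig: dividegen}; the complement $\Sigma \setminus P$ has exactly four components, each of which is a disk (this can be confirmed by an Euler characteristic count using the $2g+2$ double points, the edges of $P$, and $\chi(\Sigma)=2-2g$); and the complement admits a checkerboard coloring, since the required $\mathbb{Z}/2$ condition at each double point is automatic for a divide whose complement has all simply-connected components and whose regions split as two white and two black.

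Once admissibility is established, Ishikawa's Proposition 3.1 in \cite{i} (with the orientation correction from the discussion preceding Section~\ref{subsec: one}, so that the output is $DT^*\Sigma$ rather than $DT\Sigma$) produces a Lefschetz fibration $\pi_P : DT^*\Sigma \to D^2$ whose regular fiber and vanishing cycles are read off from $P$ via A'Campo's recipe \cite{ac}. Concretely, I would assemble the fiber by placing one roundabout (a standard embedded annulus in $\mathbb{R}^3$) at each of the $2g+2$ double points of $P$ and inserting a half-twisted band for each edge of $P$ joining two double points; the result is the surface depicted in Figure~\ref{fig: roundvanishdeneme}. A direct Euler-characteristic and boundary-component count — which I would carry out either by induction on $g$ or simply by inspection of Figures~\ref{fig: roundvanishdeneme} and \ref{fig: roundvanishgenusg} — shows that this surface has genus one with exactly $4g+4$ boundary components, matching the expected count.

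The vanishing cycles are then extracted in the usual way: the curves $\beta_1, \ldots, \beta_{2g+2}$ are the core circles of the $2g+2$ roundabouts (one per double point), and the curves $\alpha_1, \alpha_2, \gamma_1, \gamma_2$ arise as the boundaries of the four regions of $\Sigma \setminus P$, with $\alpha_i$ coming from the two white regions and $\gamma_j$ from the two black regions. This produces a total of $2g+6$ vanishing cycles, and Ishikawa's construction declares the monodromy to be the product $D(\alpha_1) D(\alpha_2) D(\beta_1) \cdots D(\beta_{2g+2}) D(\gamma_1) D(\gamma_2)$ of positive Dehn twists along them, in the stated order dictated by the ordering of critical values in the almost-complexification $F_P : T^*\Sigma \to \mathbb{C}$.

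The step requiring the most care is verifying that the specific placement of the $\alpha$ and $\gamma$ curves on the abstract genus-one surface with $4g+4$ holes is indeed the one shown in Figure~\ref{fig: roundvanishgenusg}; this amounts to tracking which roundabouts each white (resp.\ black) region of $\Sigma \setminus P$ meets and in what cyclic order, and then recognizing the resulting curves inside the plumbed surface. This is the combinatorial heart of the argument, but it is completely parallel to the $g=1$ case established in Proposition~\ref{prop: tor}, and the extra $2g$ roundabouts introduced when passing from genus one to genus $g$ contribute symmetrically, extending the curves $\alpha_i$ and $\gamma_j$ in an evident pattern. With this identification in hand, the theorem follows.
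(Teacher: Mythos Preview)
Your proposal is correct and follows essentially the same route as the paper: both apply Ishikawa's construction (with the orientation fix) to the admissible divide $P$ of Figure~\ref{fig: dividegen}, build the fiber from roundabouts and half-twisted bands via A'Campo's recipe, and read off the $\alpha$, $\beta$, $\gamma$ vanishing cycles from the double points and the colored regions, reducing everything to the $g=1$ case of Proposition~\ref{prop: tor}. The paper's proof is in fact terser than yours---it simply declares the argument ``completely analogous'' to Proposition~\ref{prop: tor}---so your write-up supplies the details the paper leaves implicit.
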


\begin{theorem} \label{thm: isomgene} For any integer $g \geq 0$, the Lefschetz fibration $DT^*\S \to D^2$ described in Theorem~\ref{thm: gencase} and  the Lefschetz fibration $DT^*\S \to D^2$ described in Theorem~\ref{thm: gene} are isomorphic.
\end{theorem}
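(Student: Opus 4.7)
The plan is to carry over the argument of Proposition~\ref{prop: isom} essentially verbatim, replacing the four vertical bands by $2g+2$ of them, so the main task is to verify that the structural features which made the $g=1$ proof work persist for arbitrary $g$. Let $\pi_1: DT^*\S \to D^2$ denote the fibration from Theorem~\ref{thm: gencase} and $\pi_2: DT^*\S \to D^2$ the one from Theorem~\ref{thm: gene}. Both fibrations have regular fiber a genus one surface with $4g+4$ boundary components, so abstractly the fibers are diffeomorphic; what needs to be produced is an explicit diffeomorphism carrying the ordered tuple of vanishing cycles of $\pi_1$ to that of $\pi_2$.

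First I would describe both fibers as plumbings. The fiber of $\pi_1$ is, by construction in Section~\ref{subsec: geng}, obtained by plumbing $2g+2$ disjoint annuli $B_1,\ldots,B_{2g+2}$ (with cores $b_j$) onto two disjoint annuli $A_1,A_2$ (with cores $a_i$) along the $4(2g+2)$ overlapping squares of Figure~\ref{fig: vanishingab}. For the fiber of $\pi_2$, I would verify (exactly as in the $g=1$ proof) that a neighborhood of each $\a_i$ in the surface of Figure~\ref{fig: roundvanishgenusg} is an annulus, and a neighborhood of each $\b_j$ in Figure~\ref{fig: roundvanishdeneme} is a roundabout (also an annulus), and that the whole surface is recovered by plumbing these $2g+2$ roundabouts onto the pair of annular neighborhoods of $\a_1\cup\a_2$, one plumbing occurring inside each of the $4(2g+2)$ "twisted squares" visible around the intersection points of $\a_i$ and $\b_j$. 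Once this combinatorial identification is made, I define the required diffeomorphism of fibers by identifying each annular neighborhood of $\a_i$ with $A_i$ (sending $\a_i\mapsto a_i$) and each roundabout neighborhood of $\b_j$ with $B_j$ (sending $\b_j\mapsto b_j$), extending consistently across the plumbing squares.

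It remains to track the last two vanishing cycles. Here I would invoke the crucial observation already recorded in the paper just before the statement of Theorem~\ref{thm: isomgene}: the curves $\g_1,\g_2$ in the Ishikawa construction are obtained from the simultaneous surgery of $\a_1\cup\a_2$ and $\b_1\cup\cdots\cup\b_{2g+2}$ at each intersection point (with the same left-turn convention as for $c_1,c_2$). This can either be verified directly from Figures~\ref{fig: roundvanishdeneme} and~\ref{fig: roundvanishgenusg}, or derived conceptually by noting that the $\a$ curves bound the two $0$-handles of $\S$, the $\b$ curves are the cores of the $2g+2$ $1$-handles joined up inside the $0$-handles, and the result of such a surgery is isotopic to the boundary of the union of $0$- and $1$-handles, which is exactly the attaching circles of the two $2$-handles of $\S$; these attaching circles are precisely what $c_1,c_2$ are on the Johns side. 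Since simultaneous surgery is canonical once one identifies the $a$-curves with the $\a$-curves and the $b$-curves with the $\b$-curves, the diffeomorphism constructed above automatically sends $c_i\mapsto \g_i$ for $i=1,2$.

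The main obstacle is really only cosmetic: checking from Figures~\ref{fig: roundvanishdeneme} and~\ref{fig: roundvanishgenusg} that the $2g+2$ roundabouts plumb onto the two annular neighborhoods of the $\a$'s in precisely the pattern of Figure~\ref{fig: vanishingab}, i.e.\ that the cyclic order in which the roundabouts hit each $\a_i$ matches the cyclic order of the squares in the Johns picture. This is a purely combinatorial matter, and the conceptual reformulation via the Morse-theoretic description of the surface (two $0$-handles, $2g+2$ $1$-handles, two $2$-handles) makes it transparent for all $g$; once this matching is in place, the argument of Proposition~\ref{prop: isom} generalizes word-for-word and gives the desired isomorphism of Lefschetz fibrations.
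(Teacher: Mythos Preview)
Your proposal is correct and follows exactly the same approach as the paper, which explicitly states that the proof of Theorem~\ref{thm: isomgene} is ``completely analogous'' to that of Proposition~\ref{prop: isom}, with the crucial observation that $\gamma_1,\gamma_2$ arise from simultaneous surgery of the $\alpha$- and $\beta$-curves. One minor slip: the number of plumbing squares is $2(2g+2)=4g+4$, not $4(2g+2)$, since each of the $2g+2$ vertical annuli meets each of the two horizontal annuli exactly once.
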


The next result immediately follows from Theorem~\ref{thm: isomgene} and Corollary~\ref{cor: gen}.

\begin{corollary} \label{cor: isom} For any integer $g \geq 0$,  the open book decomposition on  $ST^*\S$ induced by the Lefschetz fibration  $DT^*\S \to D^2$ described in Theorem~\ref{thm: gencase} is isomorphic to the open book decomposition on  $ST^*\S$ induced by the Lefschetz fibration $DT^*\S \to D^2$ described in Theorem~\ref{thm: gene}. Therefore, the open book decomposition on $ST^*\S$  induced by the Lefschetz fibration $DT^*\S \to D^2$ described in Theorem~\ref{thm: gene} supports  $\xi_{can}$ as well. \end{corollary}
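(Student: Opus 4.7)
The plan is to derive Corollary~\ref{cor: isom} directly from the two ingredients just assembled: the fiberwise isomorphism of Lefschetz fibrations in Theorem~\ref{thm: isomgene}, and the fact that the open book decomposition coming from Theorem~\ref{thm: gencase} supports $\xi_{can}$, which is the content of Corollary~\ref{cor: gen}. No new geometric input should be required; the whole statement is a formal consequence of the functoriality of the boundary open book construction.

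First I would recall that if $\pi : X \to D^2$ is a Lefschetz fibration on a $4$-manifold with corners (rounded), then the restriction $\pi|_{\partial X \setminus B} : \partial X \setminus B \to \partial D^2$ is an open book decomposition whose page is the regular fiber of $\pi$ and whose monodromy is the total monodromy of $\pi$, namely the product of positive Dehn twists along the vanishing cycles in the order specified by a chosen Hurwitz system. Consequently, a fiberwise diffeomorphism between two Lefschetz fibrations on $DT^*\S$ that sends the ordered vanishing cycles of one to those of the other restricts to an equivalence of the induced open book decompositions on $ST^*\S = \partial(DT^*\S)$: same page (up to diffeomorphism), same monodromy factorization, hence isomorphic open books.

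Next, I would apply this to our situation. Theorem~\ref{thm: isomgene} provides an explicit diffeomorphism between the regular fibers of the Lefschetz fibrations $\pi_J : DT^*\S \to D^2$ from Theorem~\ref{thm: gencase} (the Johns construction) and $\pi_I : DT^*\S \to D^2$ from Theorem~\ref{thm: gene} (the Ishikawa construction), sending $a_i \mapsto \a_i$, $b_j \mapsto \b_j$, and $c_k \mapsto \g_k$. Since the corresponding positive factorizations
\[ D(a_1)D(a_2)D(b_1)\cdots D(b_{2g+2})D(c_1)D(c_2) \quad \text{and} \quad D(\a_1)D(\a_2)D(\b_1)\cdots D(\b_{2g+2})D(\g_1)D(\g_2) \]
correspond under this diffeomorphism, the two induced open book decompositions on $ST^*\S$ are isomorphic. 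This proves the first assertion of the corollary.

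For the second assertion, recall from Corollary~\ref{cor: gen} that the open book induced by $\pi_J$ supports the canonical contact structure $\xi_{can}$. Since supporting a contact structure is invariant under open book isomorphism (two isomorphic open books support the same contact structure up to isotopy, by Giroux's correspondence), the open book induced by $\pi_I$ also supports $\xi_{can}$. The only point to keep in mind is that ``isomorphism of open books'' here means equality of the underlying abstract open book data (page, binding, monodromy up to conjugation in the mapping class group of the page rel boundary), which is exactly what Theorem~\ref{thm: isomgene} delivers; no obstacle arises because every step is either a direct citation or a routine application of the definition of the induced boundary open book.
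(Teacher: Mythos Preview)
Your proposal is correct and matches the paper's approach exactly: the paper simply states that the corollary follows immediately from Theorem~\ref{thm: isomgene} and Corollary~\ref{cor: gen}, and your argument spells out precisely this deduction---an isomorphism of Lefschetz fibrations restricts to an isomorphism of induced boundary open books, and then Corollary~\ref{cor: gen} transfers the property of supporting $\xi_{can}$.
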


Finally, we would like to list some questions that arise from the discussion in this paper.

1)  Ishikawa does not give any information about the contact structures on $ST^*\S$ adapted to the open book decompositions which are filled  by the various Lefschetz fibrations he constructs on $DT^*\S$, depending  on different possible choices of an admissible divide on $\S$. Our Corollary~\ref{cor: isom} shows that for a certain admissible divide on $\S$, Ishikawa's open book decomposition on $ST^*\S$ supports the canonical contact structure $\xi_{can}$. Is it true that any open book decomposition on $ST^*\S$ given by Ishikawa's construction supports $\xi_{can}?$

2) Is the Lefschetz fibration on $DT^*\S$ of Johns (which uses the standard
Morse function on the surface) an explicit stabilization of the Lefschetz fibration in this
paper? Is there a calculus relating stabilizations and handle slides of Morse functions
on $\S$ with stabilizations and Hurwitz moves on Lefschetz fibrations on $DT^*\S$?

3) Is it true that the Lefschetz fibration that Johns constructs on $DT^*\S$ is isomorphic to that Ishikawa constructs
for all Morse functions on $\S$, not just the specific Morse function that gives the
minimal genus Lefschetz fibration?

4)  Is the number of boundary components in this article minimal amongst all
genus one Lefschetz fibrations built from the constructions of Johns/Ishikawa varying
over different Morse functions on the surface?



\end{document}